\theoremstyle{plain}
\newtheorem{theorem}{Theorem}[section]
\newtheorem{corollary}[theorem]{Corollary}
\newtheorem{proposition}[theorem]{Proposition}
\theoremstyle{definition}
\newtheorem{definition}[theorem]{Definition}
\newtheorem{example}[theorem]{Example}
\newcommand{\SetComp}[2]{\left\{ {#1}\:\middle|\:{#2} \right\}}
\newcommand{\typesetoperator}[1]{\mathbf{#1}}
\newcommand{\typesetCategory}[1]{\mathbf{#1}}
\DeclareMathOperator{\mono}{\typesetoperator{Mono}}
\DeclareMathOperator{\homset}{\typesetoperator{Hom}}
\DeclareMathOperator{\tw}{\typesetoperator{tw}}
\DeclareMathOperator{\id}{\typesetoperator{id}}
\DeclareMathOperator{\GRPH}{\typesetCategory{Gr}}
\DeclareMathOperator{\Rmono}{\typesetCategory{\GRPH}_{R-mono}}
\DeclareMathOperator{\Rhomo}{\typesetCategory{\GRPH}_{R-homo}}
\DeclareMathOperator{\nat}{\typesetCategory{Nat}}
\DeclareMathOperator{\HGr}{\typesetCategory{HGr}}
\definecolor{gold}{RGB}{255,215,0}
\definecolor{softBlack}{RGB}{45, 47, 49}
\definecolor{creamWhite}{RGB}{245,244,241}
\definecolor{softGray}{RGB}{220, 216, 214}
\definecolor{brick}{RGB}{232, 48, 48}
\title{Spined categories: generalizing tree-width beyond graphs}
\author{Benjamin Merlin Bumpus \thanks{Mathematics and Computer Science, TU Eindhoven, GroeneLoper MetaForum Building PO Box 513, Eindhoven, 5600 MB, North Brabant, Netherlands. Supported both by an EPSRC studentship, and by the European Research Council (ERC) under the European Union's Horizon 2020 research and innovation programme (grant agreement No 803421, ReduceSearch)}   ~ \&  Zoltan A. Kocsis \thanks{CSIRO Data61, University of New South Wales, Kensington NSW 2052, Australia.}}
\date{\today}
\begin{document}

\maketitle

\begin{abstract}
Tree-width is an invaluable tool for computational problems on graphs. But often one would like to compute on other kinds of objects (e.g. decorated graphs or even algebraic structures) where there is no known tree-width analogue. Here we define an abstract analogue of tree-width which provides a uniform definition of various tree-width-like invariants including graph tree-width, hypergraph tree-width, complemented tree-width and even new constructions such as the tree-width of modular quotients. We obtain this generalization by developing a general theory of categories that admit abstract analogues of both tree decompositions and tree-width; we call these pseudo-chordal completions and the triangulation functor respectively.
\end{abstract}

\section{Introduction}
Divide and conquer algorithms are nearly ubiquitous in computer science. Indeed, already in the 1980s, Johnson~\cite{JOHNSON1985434} compiled a list of graph classes satisfying the following properties: 
\begin{itemize}
    \item their members admit recursive decompositions into smaller and simpler pieces, 
    \item many $\textsc{NP}$-hard problems can be solved in polynomial time by using these structural decompositions as data structures for dynamic programming.
\end{itemize}
Some examples from Johnson's list are: trees, partial $k$-trees, chordal graphs, series parallel graphs, split graphs and co-graphs. 

Today we are equipped with a vast literature on how to go about decomposing graphs into smaller parts and how to exploit these decompositions for algorithmic purposes. Indeed, many important graph classes which admit powerful algorithmic meta-theorems are described by measuring the \textit{width} of smallest `structural decompositions' of their members (e.g. classes of bounded tree-width or clique-width). 

Due to a plethora of algorithmic and theoretical applications, significant amounts of research effort are spent on introducing and studying new sorts of decompositions (along with corresponding width measures) that capture specific classes of objects and/or structural correspondences. These include e.g.~\emph{modular decompositions}~\cite{HABIB201041}, \emph{partitive families}~\cite{CHEIN198135, cunningham_edmonds_1980}, \emph{clique-width decomposition trees}~\cite{courcelle1993, COURCELLE199687}, \emph{branch decompositions}~\cite{robertsonX} and \emph{rank decompositions}~\cite{oum2006approximating}.

Featuring most prominently among these graph parameters is indubitably \textit{tree-width}: it has played a key part in the proof of the celebrated Robertson-Seymour graph minor theorem~\cite{RobertsonXX} and it is a powerful hammer in the parameterized complexity toolbox~\cite{flum2006parameterized, cygan2015parameterized}.

\paragraph{This paper} is motivated by the long-term ambition of extending the aforementioned decomposition methods to other kinds of mathematical structures. The specific question which we address is that of finding a uniform and abstract definition of both tree decompositions and tree-width which can then be applied to structures other than finite simple graphs. Our answer to this problem requires a change in perspective: we shift from a graph-theoretic point of view to one which is category-theoretic. This approach leads us to obtain a vast generalization of tree-width -- called the \emph{triangulation functor} -- which is defined for all objects of special kinds of categories which we call \emph{spined categories} (we delay a detailed overview our our contributions to end of this section). 

The question of generalizing notions such as tree-width to new settings is as natural as it is challenging. Indeed it is often the case that in practice one needs to compute not on graphs, but on other kinds of objects (such as decorated graphs or perhaps algebraic structures). Unfortunately, we so far do not have a wealth of decomposition methods for such objects and neither do we have structured, formulaic ways of finding such generalizations. 

The difficulty of generalization to which we refer already rears its head when one tries to lift the definition of tree-width from graphs to directed graphs. Indeed, this has been a challenging (and ongoing) research question that has captivated the research community since the 1990s~\cite{kreutzer2018} and which has led to the definition of a myriad of subtly different directed analogues of tree-width~\cite{dtw, berwanger2012dag, hunter2008digraph, safari2005d, kreutzer2018}.

When one is interested in obtaining tree-width analogues even further afield from the familiar settings of graphs and directed graphs, further challenges arise due to the fact that most of the aforementioned decomposition methods are defined explicitly in terms of the internal structure of the decomposed object (think, for example of the definition of tree-width or of clique-width decomposition trees, which use a formal grammar to specify how to construct a given graph from smaller ones). This makes it arduous to generalize a given notion of decomposition to different classes of objects especially if we wish to place as few restrictions as possible on such classes (for example, it could be that the objects we wish to decompose might not even come equipped with obvious analogues of the notions of 'vertex' or 'edge' or 'connectivity' which, of course, tend to feature prominently in graph-theoretic definitions). 

One of the points that we make in this paper is that some of the difficulties of transferring a given decomposition notion to a more general setting can sidestepped by finding a characteristic property which: (1) defines the decomposition of an object $X$ independently of the internal structure of $X$ and (2) which is formulated purely in terms of the \textit{category} inhabited by the objects we wish to decompose. Such category-theoretic characterizations have already proven successful in other fields (see e.g. Leister's work on categorial characterizations of ultraproducts~\cite{leinster2013codensity} and more recently Lebesgue integration~\cite{leinsterLebesgue}).

\paragraph{Contributions.} 
We introduce \textit{Spined categories}. These are categories $\mathcal{C}$ equipped with sufficient additional structure to admit both
\begin{enumerate}
    \item an abstract analogue of tree decompositions (which we call \textit{pseudo-chordal completions}) and
    \item a categorial generalization of tree-width (we call this the \textit{triangulation functor}) exhibited as as a special kind of functor (which we call \emph{S-functors}) from $\mathcal{C}$ to the poset of natural numbers.
\end{enumerate} 
When instantiated in the category of graphs and subgraphs, our construction agrees with the graph-thereotic one: denoting by $\Delta_{\GRPH}$ the triangulation functor (i.e. our abstract analogue of tree-width) in the category of graphs, we have $\Delta_{\GRPH}(G) = \tw(G) + 1$ for any graph $G$ (furthermore, this statement holds true even when we replace 'graph' by 'hypergraph' throughout). 

The list of examples of spined categories is not restricted to just these familiar cases; other examples include categories having as objects:  natural numbers, posets and even vertex-labeling functions. 

Our abstract analogue of tree-width is defined via the notion of an \emph{S-functor}. This constitutes a vast generalization of Halin's S-functions~\cite{halin1976s} (which are graph invariants of the form $(G \in \text{Graphs}) \mapsto (n \in \mathbb{N})$ which share several properties with the Hadwiger number, modified chromatic number\footnote{The maximum of the chromatic numbers over all minors~\cite{halin1976s}} and the modified connectivity number\footnote{One plus the maximum of the connectivity number over all minors~\cite{halin1976s}}). Indeed our main theorem (Theorem~\ref{thm:triang-functor-is-S-functor}) can also be seen as a generalization of Halin's definition of tree-width as the maximal S-function out of the (point-wise ordered) poset of all S-functions.

Our construction of triangulation functors is uniform on all spined categories and thus allows one to define new tree-width-like parameters (such as widths for new types of combinatorial objects, or notions of graph width that respect stricter notions of structural correspondence than ordinary graph isomorphism) simply by collecting the relevant objects into a spined category.

\paragraph{Outline.} To accommodate readers from different backgrounds, Section~\ref{sec:background} consists of short review of the graph- and category-theoretic background required for this paper. In Section~\ref{sec:defining_spined_categories} we introduce \textit{spined categories} (Definition~\ref{def:spined_cat}) and the corresponding notion of morphisms, \textit{spinal functors} (Definition \ref{def:spinal-functor}). Section \ref{sec:triangulation_number_results} contains the proof of our main result (Theorem~\ref{thm:triang-functor-is-S-functor}) on the existence of spinal functors called \textit{triangulation functors} which generalize tree-width-like invariants used in combinatorics. In Section \ref{sec:new_from_old} we describe a way of constructing new spined categories from previously known ones and illustrate the applications of such constructions with some examples. Section~\ref{sec:discussion} briefly discusses open questions and directions for future research.

\paragraph{Acknowledgements:} we would like to thank Steven Noble and Gramoz Goranci for their detailed feedback on the preliminary version of this article; furthermore we extend our thanks to Karl Heuer, Kitty Meeks, Ambroise Lafont, Johannes Carmesin and Bart Jansen for their comments, suggestions and discussions which helped in consolidating and maturing the ideas in this paper as well as improving their exposition.

\section{Background}\label{sec:background}
For any graph-theoretic notation not defined here, we refer the reader to Diestel's textbook \cite{Diestel2010GraphTheory} while, for category-theoretic terminology not mentioned here, we refer to Awodey's textbook~\cite{awodey2010category}. For a more gentle introduction to the background and most (but not all) of the results of this paper, we direct the reader to thesis by Bumpus~\cite{bumpus2021generalizing}. Finally we note that an extended abstract on this topic appeared at the Applied Category Theory conference ACT2021~\cite{our-extended-abstract-ACT-2021}.

The class $\mathcal{G}$ consists of all finite graphs that have no loops or parallel edges. By "graph" we always mean an element of $\mathcal{G}$ (note that this clashes with the common convention in category theory, which takes graphs to be \textit{reflexive}). Throughout, for any natural number $n$, let $[n]$ denote the set $\{1, \ldots, n\}$. We write $K_n$ (resp. $\overline{K_n}$) for the complete graph (resp. discrete graph) on the set $[n]$. We denote the \emph{disjoint union} of two sets $A$ and $B$ by $A \uplus B$. For graphs $G$ and $H$, we denote by $G \uplus H$ and $G \cap H$ respectively the graphs $(V(G) \uplus V(H), E(G), \uplus E(H))$ and $(V(G) \cap V(H), E(G) \cap E(H))$. We call a vertex $v$ of a graph $G$ an \emph{apex} if it is adjacent to every other vertex in $G$. We denote by $G \star v$ the operation of adjoining a new apex vertex $v$ to $G$.

A \textit{circuit of length}~$n$ in the simple graph $G$ is a finite sequence $(e_1,\dots,e_n)$ of edges of $G$ such that consecutive edges share an endpoint, as do $e_1$ and $e_n$. A simple graph that contains no circuits is called a \textit{tree}.

A \emph{graph homomorphism} from a graph $G$ to a graph $H$ is a function $h: V(G) \to V(H)$ such that $h(x)h(y) \in E(H)$ whenever $xy \in E(G)$. Note that, if $G$ is a subgraph of $H$, then there is an injective graph homomorphism $\phi: G \to H$ which witnesses this fact. 

\subsection{Tree-width}
Intuitively, the tree-width function, $\tw: \mathcal{G} \to \mathbb{N}$ measures how far a given graph is from being a tree. For example, edge-less graphs have tree-width $0$, forests with at least one edge have tree-width $1$ and, for $n > 1$, $n$-vertex cliques have tree-width $n-1$. Tree-width was introduced independently by many authors \cite{Bertele1972NonserialProgramming, halin1976s, RobertsonII} and thus has many equivalent definitions; the most common definition makes use of the related concept of a tree-decomposition. Here we give its definition for hypergraphs \cite{ADLER20072167}.

\begin{definition}\cite{ADLER20072167}\label{def:treedecompositionDefinition}
The pair $(T, (B_t)_{t \in V(T)})$ is a \emph{tree decomposition} of a hypergraph $H$ if $(B_t)_{t \in V(T)})$ is a sequence of subsets (called \emph{bags}) of $V(H)$ indexed by the nodes of the tree $T$ such that:
\begin{enumerate}[label=\textbf{( T\arabic*)}]
\item for every hyper-edge $F$ of $H$, there is a node $t \in V(T)$ such that $F \subseteq B_t$,
\item for every $x \in V(H)$, the set $V_{(T,x)} := \{t \in V(T): x \in B_t\}$ induces a connected subgraph in $T$ (in particular $V_{(T,x)}$ is not empty).
\end{enumerate}
\end{definition}

\noindent The \emph{width} of a tree decomposition $(T, (V_t)_{t \in T})$ of the hypergraph $H$ is defined as one less than the maximum of the cardinalities of its bags. The \emph{tree-width} $\tw(H)$ of $H$ is the minimum possible width of any tree decomposition of $H$. (The definition of tree decomposition and tree-width follow for simple graphs by viewing them as $2$-uniform hypergraphs.)

Halin~\cite{halin1976s} provides an alternative characterization of tree-width as a maximal element in a class of functions called \emph{S-functions}. These are mappings from finite graphs to $\mathbb{N}$ satisfying a set of common properties fulfilled by the \emph{chromatic number}, \emph{vertex-connectivity number} and the \emph{Hadwiger number}. In order to define S-functions, we first recall the concept of an $H$-sum of two graphs.

\begin{definition}\label{def:H_sum}
Given two graphs $G_1$ and $G_2$ and a subgraph $H$ of both of them, the $H$-sum of $G_1$ and $G_2$ is the graph $G_1 \#_H G_2$ obtained by identifying the vertices of $H$ in $G_1$ to the vertices of $H$ in $G_2$ and removing any parallel edges. Formally, given injective homorphisms $h_i: H \to G_i$ witnessing that $H$ is a subgraph in $G_1$ and $G_2$, the graph $G_1 \#_H G_2$ is defined as
$$G_1 \#_H G_2 := \bigl ( V(G_1) \uplus V(G_2) /_{h_1 = h_2}, E(G_1) \uplus E(G_2) /_\sim \bigr )$$
where edges $wx$ and $yz$ are related under $\sim$ if $\{h_1(w), h_1(x)\} = \{h_2(y), h_2(z)\}$.
\end{definition}

\noindent Given the definition of $H$-sum, we can now recall Halin's definition of S-function.  

\begin{definition}[\cite{halin1976s}]\label{def:halin_S-func}
A function $f: \mathcal{G} \rightarrow \mathbb{N}$ is called an \emph{S-function} if it satisfies the following four properties:
\begin{enumerate}[label=\textbf{(H\arabic*)}]
    \item $f(K_0) = 0$ ($K_0$ is the empty graph)\label{axiom:H1}
    \item if $H$ is a minor of $G$, then $f(H) \leq f(G)$ (minor isotonicity) \label{axiom:H2}
    \item $f(G \star v) = 1 + f(G)$ (distributivity over adding an apex) \label{axiom:H3}
    \item for each $n \in \mathbb{N}$, $G = G_1 \#_{K_n} G_2$ implies that $f(G) = \max_{i \in \{1,2\}}f(G_i)$ (distributivity over clique-sum). \label{axiom:H4}
\end{enumerate}
\end{definition}

\begin{theorem}[\cite{halin1976s}]\label{thm:Halin_tw}
The set of all S-functions forms a complete distributive lattice when equipped with the pointwise ordering. Furthermore, the function $G \mapsto \tw(G) + 1$ is maximal in this lattice.
\end{theorem}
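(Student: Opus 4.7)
The plan is to derive both assertions from a single master inequality: every S-function $f$ satisfies $f(G) \leq \tw(G) + 1$ for all graphs $G$. Once this is established, it suffices to verify that $\tw + 1$ itself satisfies the four axioms, whereupon maximality follows; moreover, the inequality guarantees that the pointwise suprema required for completeness are well-defined in $\mathbb{N}$.

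For the master inequality, I would use the standard correspondence between tree decompositions and chordal completions. Given an optimal tree decomposition of $G$, turn each bag into a clique to obtain a chordal supergraph $G^{\flat}$. Then $G^{\flat}$ can be assembled inductively by clique-summing its maximal cliques one at a time along their clique separators: each step is a $\#_{K_m}$ operation with $m \leq \tw(G)$. Iterating \ref{axiom:H1} and \ref{axiom:H3} gives $f(K_n) = n$, so each maximal clique contributes $f$-value at most $\tw(G) + 1$. Repeated application of \ref{axiom:H4} then yields $f(G^{\flat}) \leq \tw(G) + 1$, and since $G$ is a subgraph, hence minor, of $G^{\flat}$, axiom \ref{axiom:H2} delivers $f(G) \leq \tw(G) + 1$.

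Verifying that $\tw + 1$ itself is an S-function is essentially graph-theoretic bookkeeping. \ref{axiom:H1} is immediate; \ref{axiom:H2} is the classical minor-monotonicity of tree-width; \ref{axiom:H3} follows by adding the new apex to every bag of an optimal decomposition (upper bound) together with a bag-size counting argument for the lower bound; \ref{axiom:H4} follows by gluing optimal decompositions of $G_1$ and $G_2$ along bags containing the shared copy of $K_n$. Combined with the master inequality, this places $\tw + 1$ at the top of the lattice.

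For the complete distributive lattice structure, I would check directly that the pointwise $\sup$ and $\inf$ of any family of S-functions still satisfy \ref{axiom:H1}--\ref{axiom:H4}: each axiom is preserved because $\sup$ and $\inf$ of bounded sets of natural numbers commute with $\max$ and with the successor operation, and the boundedness is furnished by the master inequality. Distributivity then descends from the distributivity of $(\mathbb{N}, \max, \min)$ computed pointwise. The real obstacle lies in the master inequality: one must order the gluing of maximal cliques of $G^{\flat}$ so that each intermediate step is genuinely a clique-sum over some $K_m$, which is precisely the content of the clique-tree structure of chordal graphs (equivalently, the existence of perfect elimination orderings).
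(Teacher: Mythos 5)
The paper does not actually prove this statement -- it is quoted from Halin's 1976 paper -- so your attempt can only be judged on its own terms. The maximality half of your plan is sound and is the standard argument: $f(K_n)=n$ by iterating \ref{axiom:H1} and \ref{axiom:H3}, the clique-tree assembly of the chordal completion $G^{\flat}$ together with \ref{axiom:H4} gives $f(G^{\flat})\leq \tw(G)+1$, and \ref{axiom:H2} transfers this to $G$; verifying that $G\mapsto\tw(G)+1$ satisfies the four axioms is routine (though your ``bag-size counting'' for the lower bound in \ref{axiom:H3} is really the restriction-to-the-subtree-of-bags-containing-the-apex argument, using the Helly property of subtrees).

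The genuine gap is in the lattice half. Your claim that pointwise infima of S-functions remain S-functions ``because $\sup$ and $\inf$ \ldots commute with $\max$'' is false for $\inf$: in general $\min\bigl(\max(a_1,b_1),\max(a_2,b_2)\bigr)$ can strictly exceed $\max\bigl(\min(a_1,a_2),\min(b_1,b_2)\bigr)$ (take the crossing pattern $a_1=0, b_1=1, a_2=1, b_2=0$), so preservation of \ref{axiom:H4} by the pointwise infimum does not follow from arithmetic of $(\mathbb{N},\max,\min)$; one would have to rule out such crossings for S-functions on clique-sums, which your argument never does. Pointwise suprema are fine (and bounded by your master inequality), and since the Hadwiger number is the pointwise least S-function one does get a complete lattice abstractly, with meets defined as joins of the sets of common lower bounds -- but then these meets are not known to be computed pointwise, and your final step, deducing distributivity ``pointwise'' from $(\mathbb{N},\max,\min)$, collapses. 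As it stands the complete-distributive-lattice part of the theorem is not proved by your argument; it needs either a proof that pointwise infima of S-functions really do satisfy \ref{axiom:H4}, or a different construction of the meet together with a separate distributivity argument.
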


\subsection{Category-theoretic preliminaries}
Our generalization of Halin's characterization of tree-width relies on some standard category-theoretic tools. To keep the presentation self-contained, we recall the definitions of all relevant concepts here. 

Throughout we let $\mathbb{N}_{\leq}$ denote the ordered set of natural numbers (under the usual ordering $\leq$), regarded as a category the obvious way. Similarly, $\mathbb{N}_{=}$ denotes the discrete category whose objects are natural numbers. We write $\GRPH_{homo}$ for the category having finite simple graphs as objects and graph homomorphisms as arrows.

We call a morphism (or arrow) $f: A \rightarrow B$ in a category $\mathcal{C}$ a \textit{monomorphism in $\mathcal{C}$} (or a \textit{monic arrow in $\mathcal{C}$}) if, given any two arrows $x, y : Z \to A$, we have $f \circ x = f \circ y$ implies $x = y$. Throughout this text the notation $f: A \hookrightarrow B$ always denotes a monomorphism from $A$ to $B$. Given a category $\mathcal{C}$, we let $\mono(\mathcal{C})$ denote the subcategory of $\mathcal{C}$ given by all the monic arrows of $\mathcal{C}$ (note that this differs from the standard usage, where $\mono(\mathcal{C})$ denotes a specific subcategory of the arrow category $\mathrm{Arr}(C)$ of $\mathcal{C}$ instead).

\begin{definition}
A \textit{functor} $F$ between the categories $\mathcal{C}$ to $\mathcal{D}$ is a mapping that associates
\begin{itemize}
    \item to every object $W$ in $\mathcal{C}$ an object $F[W]$ in $\mathcal{D}$
    \item to every arrow $f: X \to Y$ in $\mathcal{C}$ an arrow $F[f]: F[X] \to F[Y]$ in $\mathcal{D}$
\end{itemize}
while preserving identity and compositions, i.e. 
\begin{itemize}
    \item $F(\id_X) = \id_{F[X]}$ for every object $X$ in $\mathcal{C}$, and
    \item $F[g \circ f] = F[g] \circ F[f]$ for all arrows $f: X \to Y$, $g: Y \to Z$ in $\mathcal{C}$.
\end{itemize}
A \textit{diagram of shape} $J$ in a category $\mathcal{C}$ is a functor from $J$ to $\mathcal{C}$.
\end{definition}

\noindent We call a diagram of shape \begin{tikzcd} G & A \arrow[l, "g"] \arrow[r, "h"'] & H\end{tikzcd} in the category $\mathcal{C}$ a \emph{span in $\mathcal{C}$}; similarly, we call \begin{tikzcd}  G \arrow[r, "g"] & A & \arrow[l, "h"'] H\end{tikzcd} a \emph{cospan}. A \emph{monic} (co)span is a (co)span consisting of monic arrows. 

\begin{definition}\label{def:pushout}
Consider a span \begin{tikzcd} G_1 & H \arrow[l, "g_1"] \arrow[r, "g_2"'] & G_2\end{tikzcd} in a  category $\mathcal{C}$. The cospan \begin{tikzcd} G_1 \arrow[r, "g_1^+"] & G_1 +_{H} G_2 & \arrow[l, "g_2^+"'] G_2\end{tikzcd} is a \textit{pushout of $g_1$ and $g_2$ in $\mathcal{C}$} if 
\begin{enumerate}
    \item $g_1^+ \circ g_1 = g_2^+ \circ g_2$, and
    \item for any cospan \begin{tikzcd} G_1 \arrow[r, "z_1"] & Z & \arrow[l, "z_2"'] G_2\end{tikzcd} such that $z_1 \circ g_1 = z_2 \circ g_2$ (a \textit{cocone} of the span) we can find a \emph{unique} morphism $m: G_1 +_{H} G_2 \rightarrow Z$ such that $m \circ g_1^+ = z_1$ and $m \circ g_2^+ = z_2$.
\end{enumerate}
We call $G_1 +_{H} G_2$ the \emph{pushout object of $g_1$ and $g_2$}.
\end{definition} 

Pushouts in $\GRPH_{homo}$ allow us to recover the definition of an $H$-sum of graphs (recall Definition \ref{def:H_sum}).

\begin{proposition}\label{prop:grph-pushout-as-clique-sum}
Every monic span in $\GRPH_{homo}$ has a pushout. In particular, the pushout of a monic span \begin{tikzcd} G_1 & H \arrow[l, hook', "g_1"] \arrow[r, hook, "g_2"'] & G_2\end{tikzcd} is the graph $G_1 \#_H G_2$ given by the $H$-sum of $G_1$ and $G_2$ along $H$. 
\begin{proof}
Take the obvious inclusion maps as $\iota_1: G_1 \hookrightarrow G_1 \#_H G_2$ and $\iota_2: G_2 \hookrightarrow G_1 \#_H G_2$. We clearly have $\iota_1 \circ g_1 = \iota_2 \circ g_2$. Now consider any other cospan \begin{tikzcd} G_1 \arrow[r, "z_1"] & Z & \arrow[l, "z_2"'] G_2\end{tikzcd} satisfying the equality $z_1 \circ g_1 = z_2 \circ g_2$. Define the map $m: G_1 \#_H G_2 \rightarrow Z$ on the vertices of $G_1 \#_H G_2$ via the equation
\begin{align*}
  m(v) = 
  \begin{cases}
    z_1(v) & \text{if $v \in G_1$,} \\
    z_2(v) & \text{otherwise.}
  \end{cases}
\end{align*}
Notice that $m$ is well-defined since if $v \in V(G_1) \cap V(G_2)$, then $z_1(v) = z_1(g_1(v)) = z_2(g_2(v)) = z_2(v)$. 

We check that $m \circ \iota_1 = z_1$. By extensionality, it suffices to prove $m(\iota_1(x)) = z_1(x)$ for an arbitrary vertex $x$ of $G$. Since $\iota_1(x) = x$ and $x \in G$, the first clause of the definition applies, and we have $m(\iota_1(x)) = m(x) = z_1(x)$. A similar proof allows us to conclude $m \circ \iota_2 = z_2$. The uniqueness of $m$ follows immediately.
\end{proof}
\end{proposition}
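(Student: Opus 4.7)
The plan is to exhibit the $H$-sum $G_1 \#_H G_2$ together with the obvious inclusion homomorphisms $\iota_1 \colon G_1 \hookrightarrow G_1 \#_H G_2$ and $\iota_2 \colon G_2 \hookrightarrow G_1 \#_H G_2$ as the pushout object, then verify both clauses of Definition~\ref{def:pushout}. Commutativity $\iota_1 \circ g_1 = \iota_2 \circ g_2$ is immediate from the construction of $\#_H$, which quotients by precisely the relation $h_1 = h_2$. So the bulk of the work goes into the universal property.

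For the universal property, I would start with an arbitrary competing cocone \begin{tikzcd} G_1 \arrow[r, "z_1"] & Z & \arrow[l, "z_2"'] G_2\end{tikzcd} satisfying $z_1 \circ g_1 = z_2 \circ g_2$, and define the candidate mediating morphism $m \colon G_1 \#_H G_2 \to Z$ on vertices by $m(v) = z_1(v)$ if $v$ comes from $V(G_1)$ and $m(v) = z_2(v)$ otherwise. Well-definedness on the overlap reduces exactly to the cocone equation $z_1 \circ g_1 = z_2 \circ g_2$ evaluated pointwise on the identified vertices of $H$, so this is the place where the hypothesis on $Z$ gets used.

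Next I would check that $m$ is a genuine graph homomorphism rather than merely a vertex map: every edge of $G_1 \#_H G_2$ descends from an edge of $G_1$ or of $G_2$, and $z_1, z_2$ are themselves homomorphisms, so $m$ sends such edges to edges of $Z$. Here one has to be mildly careful about the equivalence relation $\sim$ that removes parallel edges — since $z_1$ and $z_2$ agree on the image of $H$, any two $\sim$-related edges are sent to the same edge in $Z$, so $m$ respects the quotient. The diagrammatic equalities $m \circ \iota_1 = z_1$ and $m \circ \iota_2 = z_2$ then hold by construction of $m$.

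Finally, uniqueness of $m$ follows because the inclusions $\iota_1, \iota_2$ are jointly surjective on the vertex set of $G_1 \#_H G_2$, so the values of any mediating morphism are forced on every vertex by the required equalities. I do not anticipate a genuine obstacle here; the only delicate point is verifying that the definition of $m$ is consistent on vertices in $V(G_1) \cap V(G_2)$ coming from $H$ and that the removal of parallel edges in the $H$-sum does not obstruct the homomorphism property of $m$, and both of these follow mechanically from the cocone condition.
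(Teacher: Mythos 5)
Your proposal is correct and follows essentially the same route as the paper's own proof: take the obvious inclusions into $G_1 \#_H G_2$, define the mediating map $m$ piecewise via $z_1$ and $z_2$, use the cocone equation for well-definedness on the identified vertices, and derive uniqueness from the joint surjectivity of the inclusions. Your explicit check that $m$ preserves edges (and respects the identification of parallel edges) is a welcome detail that the paper's proof leaves implicit, but it does not change the argument.
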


We cannot generalize Proposition~\ref{prop:grph-pushout-as-clique-sum} much further, since the pushout of an arbitrary pair $(i: D \rightarrow G,j: D \rightarrow H)$ need not exist in $\GRPH_{homo}$. Indeed, taking the obvious injection $i: \overline{K_2} \rightarrow K_2$ and the unique map $j: \overline{K}_2 \rightarrow K_1$, we see that no object $Z$ and map $z_1: K_2 \rightarrow Z$ can ever satisfy $z_1 \circ i = z_2 \circ j$, since the image of the right-hand side always consists of a single vertex, while the image of the left-hand side necessarily contains an edge.

\section{Spined Categories and S-functors}\label{sec:defining_spined_categories}
Here we introduce \emph{spined categories}, categories with sufficient extra structure to admit a categorial generalization of the graph-theoretic notion of tree-width (the \textit{triangulation functor}, constructed in Section \ref{sec:triangulation_number_results}).

Spined categories come equipped with a notion of \textit{proxy pushout}, whose role is largely analogous to that of the clique-sum operation in Halin's definition of S-functions (Definition \ref{def:halin_S-func}). Proxy pushouts are similar to, but significantly less restrictive than ordinary pushouts: in fact, pushouts always give rise to proxy pushouts (Proposition \ref{prop:pushouts-as-proxy-pushouts}), but the converse does not hold. The role of cliques themselves is played by the members of a distinguished sequence of objects, called the \textit{spine}.

Among the structure-preserving functors between spined categories, we find abstract, functorial counterparts to Halin's S-functions: these are the \textit{S-functors} of Definition \ref{def:S_functor}. We shall see that S-functors are in fact more general than Halin's notion, even in the case of simple graphs. While every S-function yields an S-functor over the category $\GRPH_{mono}$ (Proposition \ref{prop:S_functions_are_S_functors}), the converse is not true.

\begin{definition}\label{def:spined_cat}
A \textit{spined category} consists of a category $\mathcal{C}$ equipped with the following additional structure:
\begin{itemize}
    \item a functor $\Omega: \mathbb{N}_= \rightarrow \mathcal{C}$ called the \textit{spine} of $\mathcal{C}$,
    \item an operation $\mathfrak{P}$ (called the \textit{proxy pushout}) that assigns to each span of the form
\begin{tikzcd}
G & \Omega_n \arrow[l, "g"'] \arrow[r, "h"] & H
\end{tikzcd}
    in $\mathcal{C}$ a distinguished cocone
\begin{tikzcd}
G \arrow[r, "{\mathfrak{P}(g,h)_g}"] & {\mathfrak{P}(g,h)} & H \arrow[l, "{\mathfrak{P}(g,h)_h}"']
\end{tikzcd}
\end{itemize}
subject to the following conditions:
\begin{enumerate}[label=\textbf{SC\arabic*}]
    \item For every object $X$ of $\mathcal{C}$ we can find a morphism $x: X \rightarrow \Omega_n$ for some $n \in \mathbb{N}$.\label{property:SC1}
    \item For any cocone
\begin{tikzcd}
G & \Omega_n \arrow[l, "g"'] \arrow[r, "h"] & H
\end{tikzcd}
    and any pair of morphisms $g': G \rightarrow G'$ and $h': H \rightarrow H'$ we can find a \emph{unique} morphism $(g',h'): \mathfrak{P}(g,h) \rightarrow \mathfrak{P}(g' \circ g, h' \circ h)$ making the following diagram commute:
\begin{center}
\begin{tikzcd}
\Omega_n \arrow[d, "h"'] \arrow[r, "g"]                            & G \arrow[r, "g'"] \arrow[d, "{\mathfrak{P}(g,h)_g}"] & G' \arrow[dd, "{\mathfrak{P}(g'\circ g,h'\circ h)_{g' \circ g}}"] \\
H \arrow[d, "h'"'] \arrow[r, "{\mathfrak{P}(g,h)_h}"']             & {\mathfrak{P}(g,h)} \arrow[rd, "{(g',h')}", dashed]  &                                                                   \\
H' \arrow[rr, "{\mathfrak{P}(g'\circ g,h'\circ h)_{h' \circ h}}"'] &                                                      & {\mathfrak{P}(g' \circ g,h' \circ h)}                            
\end{tikzcd}
\end{center}\label{property:SC2}
\end{enumerate}
\end{definition}

One could define \textit{proxy pullbacks} dually to proxy pushouts. As the name suggests, categories with enough pushouts (pullbacks) always have proxy pushouts (proxy pullbacks). This observation gives rise to many examples of spined categories.

\begin{proposition}\label{prop:pushouts-as-proxy-pushouts}
Take a category $\mathcal{C}$ equipped with functor $\Omega: \mathbb{N}_= \rightarrow \mathcal{C}$ such that the following hold:
\begin{enumerate}
    \item for any object $X$ of $\mathcal{C}$ there is some $n \in \mathbb{N}$ and morphism $x: X \rightarrow \Omega_n$, and
    \item every span of the form
\begin{tikzcd}
G & \Omega_n \arrow[l, "g"'] \arrow[r, "h"] & H
\end{tikzcd}
    has a pushout in $\mathcal{C}$.
\end{enumerate}
The map $\mathfrak{P}$ that assigns to every span
\begin{tikzcd}
G & \Omega_n \arrow[l, "g"'] \arrow[r, "h"] & H
\end{tikzcd}
its pushout square turns $\mathcal{C}$ into a spined category.
\begin{proof}
We only have to verify Property~\ref{property:SC2}. Consider the diagram
\begin{center}
\begin{tikzcd}
\Omega_n \arrow[d, "h"'] \arrow[r, "g"]  & G \arrow[r, "g'"] \arrow[d, "\iota_G"]         & G' \arrow[dd, "\iota_G'"] \\
H \arrow[d, "h'"'] \arrow[r, "\iota_H"'] & G+_{\Omega_n}H \arrow[rd, "{ }", dotted] &                           \\
H' \arrow[rr, "\iota_H'"']               &                                                & G' +_{\Omega_n} H'       
\end{tikzcd}
\end{center}
We have to exhibit the unique dotted morphism $G +_{\Omega_n} H \rightarrow G' +_{\Omega_n} H'$ making this diagram commute. Notice that the arrows $\iota_G' \circ g'$ and $\iota_H' \circ h'$ form a cocone of the span of $g,h$. Since the pushout of $g$ and $h$ is universal among such cocones, the existence and uniqueness of the required morphism $G +_{\Omega_n} H \rightarrow G' +_{\Omega_n} H'$ follows.
\end{proof}
\end{proposition}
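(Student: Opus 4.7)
The plan is to verify the two axioms \ref{property:SC1} and \ref{property:SC2} from Definition~\ref{def:spined_cat}. Axiom \ref{property:SC1} is immediate: it is literally the content of hypothesis (1) in the statement of the proposition. All the work therefore lies in verifying \ref{property:SC2}, that is, producing the unique dotted morphism in the big pentagonal diagram given the span $G \leftarrow \Omega_n \to H$, the extra arrows $g': G \to G'$ and $h': H \to H'$, and the two pushout squares guaranteed by hypothesis (2).

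The key observation is to reorganize the data so that the universal property of the pushout $\mathfrak{P}(g,h) = G +_{\Omega_n} H$ applies directly. Writing $\iota_G, \iota_H$ (respectively $\iota_{G'}, \iota_{H'}$) for the pushout injections into $G +_{\Omega_n} H$ (respectively $G' +_{\Omega_n} H'$), I would first argue that the pair of composite arrows $\iota_{G'} \circ g' : G \to G' +_{\Omega_n} H'$ and $\iota_{H'} \circ h' : H \to G' +_{\Omega_n} H'$ constitutes a cocone over the original span $G \leftarrow \Omega_n \to H$. Indeed,
\[
(\iota_{G'} \circ g') \circ g \;=\; \iota_{G'} \circ (g' \circ g) \;=\; \iota_{H'} \circ (h' \circ h) \;=\; (\iota_{H'} \circ h') \circ h,
\]
where the middle equality is simply the commutativity of the pushout square for the span of $g' \circ g$ and $h' \circ h$.

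With this cocone in hand, the universal property of the pushout $G +_{\Omega_n} H$ supplies a unique morphism $m : G +_{\Omega_n} H \to G' +_{\Omega_n} H'$ with $m \circ \iota_G = \iota_{G'} \circ g'$ and $m \circ \iota_H = \iota_{H'} \circ h'$. Reading these two equalities off the pentagon shows that $m$ is precisely the arrow required by \ref{property:SC2}, and its uniqueness is inherited from the uniqueness clause of the universal property. I do not anticipate any real obstacle: the entire content is bookkeeping around pushout universality, and the only thing worth care is matching the injections to the correct span so that the cocone equation above actually typechecks.
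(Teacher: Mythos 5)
Your proposal is correct and follows essentially the same route as the paper: both verify \ref{property:SC2} by observing that $\iota_{G'} \circ g'$ and $\iota_{H'} \circ h'$ form a cocone over the original span (yours makes the cocone equation via the second pushout square explicit, which the paper leaves implicit) and then invoke the universal property of the pushout $G +_{\Omega_n} H$ to obtain the unique mediating morphism.
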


Since pushouts in poset categories are given by least upper bounds, Proposition~\ref{prop:pushouts-as-proxy-pushouts} allows us to construct a simple (but important) first example of a spined category.

\begin{example}\label{example:spined-category-Nat}
Let $\leq$ denote the usual ordering on the natural numbers. The poset $\mathbb{N}_\leq$, when equipped with the spine $\Omega_n = n$ (and maxima as proxy pushouts) constitutes a spined category denoted $\nat$.
\end{example}

Combining Propositions~\ref{prop:grph-pushout-as-clique-sum}~and~\ref{prop:pushouts-as-proxy-pushouts} gives us a first example of a "combinatorial" spined category, the category $\GRPH_{mono}$ which has graphs as objects and injective graph homomorphisms as arrows. First consider a span of the form
\begin{tikzcd} A & X \arrow[l, hook'] \arrow[r, hook] & B\end{tikzcd}
in $\GRPH_{homo}$. Notice that all arrows are monic in the corresponding pushout square. However, given a cocone
\begin{tikzcd} A \arrow[r, "a"'] & Z & B \arrow[l, "b"] \end{tikzcd}
the pushout morphism $A +_X B \rightarrow Z$ can fail to be a monomorphism (for instance in the case where the images of $a$ and $b$ have non-empty intersection). It follows that the clique sum does not give rise to pushouts in the category $\GRPH_{mono}$. Nonetheless, the category satisfies Property~\ref{property:SC2}, so the lack of pushouts does not stop us from constructing a spined category.

\begin{proposition}\label{prop:proxy-pushouts-in-grph-mono}
The category $\GRPH_{mono}$, equipped with the spine $n \mapsto K_n$ and clique sums as proxy pushouts forms a spined category.
\begin{proof}
Property~\ref{property:SC1} is evident, but we need to verify Property~\ref{property:SC2}. Consider the diagram
\begin{center}
\begin{tikzcd}
\Omega_n \arrow[d, "h"', hook] \arrow[r, "g", hook] & G \arrow[r, "g'", hook] \arrow[d, "\iota_G", hook] & G' \arrow[dd, "\iota_G'", hook] \\
H \arrow[d, "h'"', hook] \arrow[r, "\iota_H"', hook]      & G\#_{\Omega_n}H \arrow[rd, "!p", dashed]     &                           \\
H' \arrow[rr, "\iota_H'"', hook]                          &                                              & G' \#_{\Omega_n} H'      
\end{tikzcd}
\end{center}
in $\GRPH_{homo}$. Notice that the arrows $\iota_G, \iota_G', \iota_H, \iota_H'$ are all monic. We have to establish that the morphism $p: G\#_{\Omega_n}H \rightarrow G' \#_{\Omega_n} H'$ (which is unique since it is a pushout arrow in $\GRPH_{homo}$) is monic as well. Note that $p$ maps any vertex $x$ in $G\#_{\Omega_n}H$ to $(\iota'_G \circ g')(x)$ if $x$ is in $G$ and to $(\iota'_H \circ h')(x)$ otherwise. Thus, since $V(G') \cap V(H') = V(G) \cap V(H)$, we have that, for any $x$ and $y$ in $V(G\#_{\Omega_n}H)$, if $p(x) = p(y)$ then $x = y$. Thus $p$ is injective (i.e. it is monic and hence it is in $\GRPH_{mono}$).
\end{proof}
\end{proposition}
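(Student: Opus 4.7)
The plan is to verify the two spined-category axioms directly, leaning on Proposition~\ref{prop:grph-pushout-as-clique-sum} to avoid re-deriving the existence of the mediating map. Property~\ref{property:SC1} is essentially trivial: for any finite simple graph $G$ on $k$ vertices, any bijection $V(G)\hookrightarrow [k]=V(K_k)$ is automatically a graph homomorphism (since $K_k$ contains every possible edge between distinct vertices), and obviously injective, so it constitutes an arrow $G\hookrightarrow\Omega_k$ in $\GRPH_{mono}$.

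For Property~\ref{property:SC2}, the strategy is to transport the unique pushout mediating morphism from $\GRPH_{homo}$ to $\GRPH_{mono}$. Given the square in the statement, Proposition~\ref{prop:grph-pushout-as-clique-sum} guarantees that the clique sums $G\#_{\Omega_n}H$ and $G'\#_{\Omega_n}H'$ are genuine pushouts in $\GRPH_{homo}$; in particular the universal property yields a unique homomorphism $p:G\#_{\Omega_n}H\to G'\#_{\Omega_n}H'$ making the diagram commute. Uniqueness of $p$ as a $\GRPH_{mono}$-arrow is then free, because any second such arrow would be another $\GRPH_{homo}$-mediator and the pushout admits only one. What remains is to establish that $p$ is \emph{monic}, i.e.\ an injection on vertices, so that it actually lies in the subcategory $\GRPH_{mono}$.

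The injectivity step is the main obstacle and it is best handled by a case analysis. Represent the vertices of $G\#_{\Omega_n}H$ by elements of $V(G)\sqcup\bigl(V(H)\setminus h(V(\Omega_n))\bigr)$, and analogously for $G'\#_{\Omega_n}H'$. By construction $p$ sends $x\in V(G)$ to $g'(x)$ and $x\in V(H)\setminus h(V(\Omega_n))$ to $h'(x)$. Suppose $p(x)=p(y)$. If both $x,y\in V(G)$, monicity of $g'$ in $\GRPH_{mono}$ forces $x=y$; if both lie in $V(H)\setminus h(V(\Omega_n))$, monicity of $h'$ does the same. The only subtle case is the mixed one: $x\in V(G)$ and $y\in V(H)\setminus h(V(\Omega_n))$ with $g'(x)=h'(y)$ in the clique sum. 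Here one uses that in $G'\#_{\Omega_n}H'$ the vertex $h'(y)$ can coincide with some $g'(x)$ only if $y\in h(V(\Omega_n))$ (since the identification is made exactly along $g'(g(V(\Omega_n)))=h'(h(V(\Omega_n)))$, and $h'$ is injective), contradicting the assumption on $y$. This contradiction forces $x=y$ vacuously and completes the proof that $p$ is monic.

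The bulk of the effort therefore goes into the bookkeeping of the mixed case above; everything else amounts to either an immediate observation (SC1, uniqueness) or a direct invocation of the previously established pushout result. No new categorical machinery beyond Propositions~\ref{prop:grph-pushout-as-clique-sum} and~\ref{prop:pushouts-as-proxy-pushouts} should be needed.
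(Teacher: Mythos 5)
Your proposal is correct and follows essentially the same route as the paper: Property SC1 is dispatched by the obvious injection into $K_n$, and Property SC2 is verified by taking the unique pushout mediating morphism supplied by Proposition~\ref{prop:grph-pushout-as-clique-sum} in $\GRPH_{homo}$ and checking it is injective on vertices, so that it lives in $\GRPH_{mono}$. Your explicit case analysis (including the mixed case ruled out because identifications in $G'\#_{\Omega_n}H'$ occur only along the image of $\Omega_n$) simply spells out what the paper compresses into the assertion that the identified parts of the two clique sums coincide.
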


\noindent We will encounter further examples of spined categories below, including:

\begin{enumerate}
    \item the poset of natural numbers under the divisibility relation (Proposition \ref{prop:generalized-clique-number-not-S-functor}),
    \item the category of posets (Proposition \ref{prop:poset-lacks-sfunctors}),
    \item the category of hypergraphs (Theorem \ref{thm:hypergraphs}),
    \item the category of vertex-labelings of graphs (Examples \ref{example:modular_tree_width} and \ref{example:chromatic_tree_width}).
\end{enumerate}

Now we introduce the notion of a \emph{spinal functor} as the obvious notion of morphism between two spined categories.

\begin{definition}\label{def:spinal-functor}
Consider spined categories $(\mathcal{C},\Omega^\mathcal{C}, \mathfrak{P}^\mathcal{C})$ and $(\mathcal{D},\Omega^\mathcal{D}, \mathfrak{P}^\mathcal{D})$. We call a functor $F: \mathcal{C} \rightarrow \mathcal{D}$ a \textit{spinal functor} if it
\begin{enumerate}[label=\textbf{SF\arabic*}]
    \item \textit{preserves the spine}, i.e. $F \circ \Omega^\mathcal{C} = \Omega^\mathcal{D}$, and \label{property:SF1}
    \item \textit{preserves proxy pushouts}, i.e. given a proxy pushout square
\begin{center}
\begin{tikzcd}
\Omega_n \arrow[d, "h"'] \arrow[r, "g"] & G \arrow[d, "{\mathfrak{P}^\mathcal{C}(g,h)_g}"] \\
H \arrow[r, "{\mathfrak{P}^\mathcal{C}(g,h)_h}"']   & {\mathfrak{P}^\mathcal{C}(g,h)}                 
\end{tikzcd}
\end{center}
    in the category $\mathcal{C}$, the image
\begin{center}
\begin{tikzcd}
\Omega_n \arrow[d, "Fh"'] \arrow[r, "Fg"]   & {F[G]} \arrow[d, "{F\mathfrak{P}^\mathcal{C}(g,h)_g}"] \\
{F[H]} \arrow[r, "{F\mathfrak{P}^\mathcal{C}(g,h)_h}"'] & {F[\mathfrak{P}^\mathcal{C}(g,h)]}
\end{tikzcd}
\end{center}
    forms a proxy pushout square in $\mathcal{D}$. Equivalently, $F[\mathfrak{P}^\mathcal{C}(g,h)] = \mathfrak{P}^\mathcal{D}(Fg,Fh)$, $F\mathfrak{P}^\mathcal{C}(g,h)_g = \mathfrak{P}^\mathcal{D}(Fg,Fh)_{Fg}$ and $F\mathfrak{P}^\mathcal{C}(g,h)_h = \mathfrak{P}^\mathcal{D}(Fg,Fh)_{Fh}$ all hold.\label{property:SF2}
\end{enumerate}
\end{definition}

\noindent Recall the spined category $\mathbf{Nat}$ of Example~\ref{example:spined-category-Nat}. Using spinal functors to $\nat$, we obtain the following categorial counterparts to Halin's S-functions.

\begin{definition}\label{def:S_functor}
An \emph{S-functor} over the spined category $\mathcal{C}$ is a spinal functor ${F: \mathcal{C} \rightarrow \nat}$.
\end{definition}

Proxy pushouts in $\GRPH_{mono}$ are given by clique sums over complete graphs, while pushouts in $\nat$ are given by maxima. Consequently, given an S-functor $F: \GRPH_{mono} \rightarrow \nat$, Property~\ref{property:SF2} reduces to the equality $F[G \#_{K_n} H] = \max\{F[G],F[H]\}$ (cf. Property~\ref{axiom:H4} of Halin's S-functions).

\begin{proposition}\label{prop:S_functions_are_S_functors}
Every S-function $f: \mathcal{G} \rightarrow \mathbb{N}$ gives rise to an S-functor $F$ satisfying $F[X] = f(X)$ for all objects $X$ of $\GRPH_{mono}$.
\begin{proof}
Take an S-function $f: \mathcal{G} \rightarrow \mathbb{N}$. Take a morphism $f: X\rightarrow Y$ in $\mathcal{C}$. Since $f$ is a graph monomorphism, $X$ is isomorphic to a subgraph of $Y$, and is therefore a (trivial) minor of $Y$. Thus, $f(X) \leq f(Y)$ holds by Property~\ref{axiom:H2}. It follows that the map $F$ defined by the equations $F[X] = f(X)$ and $Ff = (F[X] \leq F[Y])$ for each pair of objects $X,Y$ and each morphism $f: X \rightarrow Y$ constitutes a functor from $\GRPH_{mono}$ to the poset category~$\mathbb{N}_\leq$.

We show that $F$ preserves the spine inductively, by proving $F[K_n] = f(K_n) = n$ for all $n \in \mathbb{N}$:
\begin{itemize}
    \item \textbf{Base case:} We have $F[K_0] = 0$ by Property~\ref{axiom:H1}.
    \item \textbf{Inductive case:} Assume that $F[K_n] = f(K_n) = n$. Since $K_n \star v = K_{n+1}$, we have $F[K_{n+1}] = f(K_{n+1}) = f(K_n \star v) = 1 + f(K_n) = 1 + n$ by Property~\ref{axiom:H3}.
\end{itemize}
The preservation of proxy pushouts follows immediately by Property~\ref{axiom:H4}. Hence $F$ is a spinal functor as we claimed.
\end{proof}
\end{proposition}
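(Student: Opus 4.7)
The plan is to construct $F$ explicitly from $f$ and verify each clause of Definition~\ref{def:spinal-functor} in turn. On objects, I set $F[X] := f(X)$; on morphisms, I take the unique arrow available in the poset category $\mathbb{N}_\leq$, which exists precisely when $f(X) \leq f(Y)$. The first step is therefore to check that every monic graph homomorphism $m: X \hookrightarrow Y$ satisfies $f(X) \leq f(Y)$: the image $m(X)$ is isomorphic to $X$ and is a subgraph, hence a (trivial) minor, of $Y$, so property~\ref{axiom:H2} applies. Functoriality is then immediate, since $\mathbb{N}_\leq$ is a thin category (at most one arrow between any pair of objects), so identities and composites are forced.

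Next I verify \ref{property:SF1}, namely $F[K_n] = n$ for every $n$. This is a straightforward induction on $n$: the base case $F[K_0] = f(K_0) = 0$ is property~\ref{axiom:H1}, while the inductive step uses $K_{n+1} = K_n \star v$ together with property~\ref{axiom:H3} to obtain $F[K_{n+1}] = 1 + F[K_n] = n+1$.

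For \ref{property:SF2}, I would unfold the definitions: proxy pushouts in $\GRPH_{mono}$ are the clique sums $G \#_{K_n} H$ (Proposition~\ref{prop:proxy-pushouts-in-grph-mono}), while proxy pushouts in $\nat$ are binary maxima (Example~\ref{example:spined-category-Nat}). The required equality $F[G \#_{K_n} H] = \max\{F[G], F[H]\}$ is thus exactly property~\ref{axiom:H4}; the coincidence of cocone legs in $\nat$ is then automatic by thinness.

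The only substantive point in the whole argument is the well-definedness of $F$ on morphisms, which requires the observation that $\GRPH_{mono}$-arrows induce subgraph embeddings and hence (trivial) minors, so that minor isotonicity suffices even though Halin's axioms do not mention homomorphisms directly. Every other step is a direct translation between the axiomatic languages of \ref{axiom:H1}--\ref{axiom:H4} and \ref{property:SF1}--\ref{property:SF2}, made trivial by the thinness of $\nat$.
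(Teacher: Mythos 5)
Your proposal is correct and follows essentially the same route as the paper's proof: well-definedness on morphisms via subgraphs-as-trivial-minors and \ref{axiom:H2}, spine preservation by induction using \ref{axiom:H1} and \ref{axiom:H3}, and proxy-pushout preservation as a direct restatement of \ref{axiom:H4}. The only difference is that you spell out the identification of proxy pushouts with clique sums and maxima and the role of thinness of $\mathbb{N}_\leq$, which the paper leaves implicit.
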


We note, however that the converse of Proposition \ref{prop:S_functions_are_S_functors} does not hold (not even in $\GRPH_{mono}$). To see this, note that while the clique number is an $S$-functor in $\GRPH_{mono}$, it may increase when taking minors. Thus the clique number does not satisfy Property \ref{axiom:H2} and hence it is not an $S$-function. 

Using the natural indexing on the spine given by the functor $\Omega: \mathbb{N}_= \rightarrow \mathcal{C}$, we can associate the following numerical invariants to each object of the spined category $\mathcal{C}$. 

\begin{definition}\label{def:order}
Take a spined category $(\mathcal{C}, \Omega, \mathfrak{P})$ and an object $X \in \mathcal{C}$. We define the \textit{order} $|X|$ of the object $X$ as the least $n \in \mathbb{N}$ such that $\mathcal{C}$ has a morphism $X \rightarrow \Omega_n$. Similarly, we define the \textit{generalized clique number} $\omega(X)$ as the largest $n \in \mathbb{N}$ for which~$\mathcal{C}$ contains a morphism $\Omega_n \rightarrow X$ (whenever such $n$ exists).
\end{definition}

It's clear that a spined category $(\mathcal{C}, \Omega_n, \mathfrak{P})$ where $|\Omega_n| < n$ (resp.~ $\omega(\Omega_n) > n$) does not admit any S-functors since it then would be impossible for any candidate $S$-functor to preserve the spine. In particular there are no S-functors defined on the category $\GRPH_{homo}$. However, S-functors may fail to exist even if $|\Omega_n| = n$ ($\omega(\Omega_n) = n$). We construct such an example below.

\begin{proposition}\label{prop:poset-lacks-sfunctors}
There exist spined categories $(\mathcal{C}, \Omega, \mathfrak{P})$ satisfying $|\Omega_n| = n = \omega(\Omega_n)$  that do not admit any S-functors.
\begin{proof}
Consider the category $\mathbf{Poset}_{mono}$ which has finite posets as objects and order-preserving injections as morphisms. Let $\Omega_n$ denote set $\SetComp{m \in \mathbb{N}}{m \leq n}$ under its usual linear ordering, and let $\mathfrak{P}$ assign to each span of the form
\begin{tikzcd}
G & \Omega_n \arrow[l, "g"'] \arrow[r, "h"] & H
\end{tikzcd}
the pushout $G +_{\Omega_n} H$ of the span in $\mathbf{Poset}_{homo}$ (the category of posets is cocomplete~\cite{awodey2010category}, so in particular it has all pushouts). We will show that the structure $(\mathbf{Poset}_{mono}, \Omega, \mathfrak{P})$ forms a spined category that does not admit any S-functors. 

Take any poset $P$ on $n$ elements and note that there is a monomorphism from $P$ to $L_n$. This verifies Property~\ref{property:SC1}. For Property~\ref{property:SC2} consider the following diagram.
\begin{center}
\begin{tikzcd}
\Omega_n \arrow[d, "h"', hook] \arrow[r, "g", hook] & G \arrow[r, "g'", hook] \arrow[d, "\iota_G", hook] & G' \arrow[dd, "\iota_G'", hook] \\
H \arrow[d, "h'"', hook] \arrow[r, "\iota_H"', hook]      & G +_{\Omega_n} H \arrow[rd, "!p", dashed]     &                           \\
H' \arrow[rr, "\iota_H'"', hook]                          &                                              & G' +_{\Omega_n} H'      
\end{tikzcd}
\end{center}
Notice that the arrows $\iota_G, \iota_G', \iota_H, \iota_H'$ are all monic. We have to establish that the morphism $p: G +_{\Omega_n}H \rightarrow G' +_{\Omega_n} H'$ (which is unique since it is a pushout arrow in $\mathbf{Poset}_{homo}$) is monic as well. Notice that $p$ can be defined piece-wise as the map taking any point $x$ in $G +_{\Omega_n}H$ to $(\iota'_G \circ g')(x)$ if $x$ is in $G$ and to $(\iota'_H \circ h')(x)$ otherwise. Since $G' +_{\Omega_n} H'$ is obtained by identifying the points in the image of $\Omega_n$ under $g'\circ g$ with the points in the image of $\Omega_n$ under $h'\circ h$, we have that, by its definition, $p$ must be injective and hence monic. 

Now we show that $(\mathbf{Poset}_{mono}, \Omega, \mathfrak{P})$ does not admit any S-functors. Assume for a contradiction that there exists an S-functor $F$ over $(\mathbf{Poset}_{mono}, \Omega, \mathfrak{P})$. Consider the linearly ordered posets $\Omega_3 = \{a \leq b \leq c\}$, $\Omega_2 = \{d \leq e\}$, and $\Omega_1 = \{x\}$. Since any spinal functor preserves the spine, we must have $F[\Omega_3] = 3$ and $F[\Omega_2] = 2$. Now consider the monomorphisms $f: \Omega_1 \rightarrow \Omega_3$ and $g: \Omega_1 \rightarrow \Omega_2$ given by $f(x) = c$ and $g(x) = d$. The pushout $P$ of $f,g$ is isomorphic to $\Omega_4$. Preservation of proxy pushouts immediately yields $4 = F[\Omega_4] = F[P] = \max\{2,3\} = 3$, a contradiction.
\end{proof}
\end{proposition}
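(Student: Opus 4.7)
The plan is to exhibit a concrete counterexample: a spined category of posets whose proxy pushouts are computed using pushouts in the larger category of posets with all order-preserving maps, and then use a single span to derive a numerical contradiction for any hypothetical S-functor. I would take $\mathcal{C} = \mathbf{Poset}_{mono}$ (finite posets and order-preserving injections), let the spine $\Omega_n$ be the linear chain $\{0 \leq 1 \leq \cdots \leq n-1\}$, and define $\mathfrak{P}(g,h)$ to be the pushout in the cocomplete category $\mathbf{Poset}_{homo}$. This data is natural, and any $n$-element poset embeds in $\Omega_n$ by extending its partial order to a linear order, which gives both SC1 and the equality $|\Omega_n| = n$; the other direction $\omega(\Omega_n) = n$ is automatic since a monomorphism out of a linear chain is determined by a chain in the target of the same length.

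The substantive verification is SC2: that the universal pushout map between $G +_{\Omega_n} H$ and $G' +_{\Omega_n} H'$ in $\mathbf{Poset}_{homo}$ is itself monic. Here I would describe the pushout explicitly as the quotient of the disjoint union $V(G) \uplus V(H)$ by the equivalence forced by the span, ordered by the reflexive-transitive closure of the two partial orders. Along a monic span, the only identifications occur among the images of $\Omega_n$, so the piecewise definition $p(x) = (\iota_{G'} \circ g')(x)$ on the $G$-part and $p(x) = (\iota_{H'} \circ h')(x)$ on the $H$-part yields a well-defined injection: distinct equivalence classes in the smaller pushout remain distinct in the larger one because $g'$ and $h'$ are themselves injective, and the only way a collision could arise would require an identification already present in the domain pushout.

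With $(\mathbf{Poset}_{mono}, \Omega, \mathfrak{P})$ established as a spined category satisfying $|\Omega_n| = n = \omega(\Omega_n)$, I would rule out S-functors by a direct arithmetical contradiction. Assume $F : \mathbf{Poset}_{mono} \rightarrow \mathbf{Nat}$ is a candidate S-functor. By SF1, $F[\Omega_k] = k$ for every $k$. Consider the span with apex $\Omega_1 = \{x\}$, sending $x$ to the top element of $\Omega_3$ and to the bottom element of $\Omega_2$. Computing the pushout in $\mathbf{Poset}_{homo}$ glues the two chains at their endpoints and produces (up to isomorphism) the chain $\Omega_4$. Since proxy pushouts in $\mathbf{Nat}$ are binary maxima, SF2 forces $F[\Omega_4] = \max\{F[\Omega_3], F[\Omega_2]\} = \max\{3,2\} = 3$, while spine preservation gives $F[\Omega_4] = 4$, a contradiction.

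The hard part will be the monicity verification in SC2, since everywhere else one is just reading off properties of chains and maxima. The subtlety is that pushouts in $\mathbf{Poset}_{homo}$ can collapse unrelated elements through zig-zag chains of comparabilities forced by the two orders, so one must check that no such nontrivial identification can occur between images of $G$ and $H$ beyond what is already present in the span — a verification that relies crucially on the injectivity of $g'$, $h'$ and on $V(G') \cap V(H')$ contributing only the image of $\Omega_n$.
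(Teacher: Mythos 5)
Your proposal is correct and follows essentially the same route as the paper's own proof: the same spined category $(\mathbf{Poset}_{mono}, \Omega, \mathfrak{P})$ with chains as the spine and $\mathbf{Poset}_{homo}$-pushouts as proxy pushouts, the same piecewise argument for monicity in the SC2 check, and the same counterexample span gluing the top of $\Omega_3$ to the bottom of $\Omega_2$ to force $4 = \max\{3,2\}$. Your explicit attention to possible zig-zag identifications in the poset pushout and to $|\Omega_n| = n = \omega(\Omega_n)$ is a slightly more careful rendering of details the paper treats briefly, but it is not a different argument.
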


Instead of exhaustively enumerating all possible obstructions to the existence of S-functors, we restrict our attention to those spined categories that come equipped with at least one S-functor. We shall see that the existence of a single S-functor already suffices to construct a functorial analogue of tree-width on any such category.

\begin{definition}\label{def:measurable-spined-category}
We call a spined category \emph{measurable} if it admits at least one S-functor.
\end{definition}

Of course $\nat$ is a measurable spined category. The measurability of $\GRPH_{mono}$ follows from Proposition~\ref{prop:S_functions_are_S_functors}, by noticing that that the clique number is an S-functor. However, this is a very special property enjoyed by $\GRPH_{homo}$.

\begin{proposition}\label{prop:generalized-clique-number-not-S-functor}
The generalized clique number $\omega$ need not give rise to an S-functor over an arbitrary measurable spined category.
\begin{proof}
Equip the natural numbers with the divisibility relation, and regard the resulting poset as a category $\mathbb{N}_{div}$. Equip $\mathbb{N}_{div}$ with the spine $$\Omega_n = \prod_{p \leq n} p^n$$ where $p$ ranges over the primes. The poset category $\mathbb{N}_{div}$ has all pushouts, the pushout of objects $n,m$ given by least common multiple of $n$ and $m$. Let $\mathfrak{P}(x \leq n, x \leq m)$ denote the least common multiple $\mathrm{lcm}(n,m)$. We verify each of the spined category properties in turn:
\begin{itemize}
    \item[\ref{property:SC1}:] Take any $n \in \mathbb{N}$. Let $p$ and $k$ denote respectively the largest prime and exponent which appears in the prime factorization of $n$. Then $n$ divides $\Omega_{p^k}$.
    \item[\ref{property:SC2}:] Immediate from Proposition~\ref{prop:pushouts-as-proxy-pushouts}.
\end{itemize}
Consider the map that sends each object $n \in \mathbb{N}_{div}$ to the highest exponent that occurs in the prime factorization of $n$ (OEIS~A051903~\cite{oeisA051903}). This is clearly an S-functor on the category $(\mathbb{N}_{div}, \Omega, \mathrm{lcm})$, which is therefore measurable. However, we claim that $\omega$ itself is not an S-functor on this spined category.

To see this, consider the objects $16$ and $81$ in $\mathbb{N}_{div}$. Since $\Omega_2 = 2^2 = 4$ and $\Omega_3 = 2^3\cdot 3^3 = 216$, the largest $n$ for which $\Omega_n$ divides $16$ is $\omega[16] = 2$. Similarly, $\omega[81] = 1$. However, we have $\omega[16 \cdot 81] = \omega[1296] = \omega[\Omega_4] = 4 \neq 2$.
\end{proof}
\end{proposition}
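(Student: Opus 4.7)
The plan is to exhibit one explicit measurable spined category in which the generalized clique number $\omega$ visibly fails to preserve proxy pushouts. Poset categories are a natural testbed: their cocones are essentially determined by the objects, and the spine can be chosen very freely. I would work with $\mathbb{N}$ ordered by divisibility, viewed as a poset category $\mathbb{N}_{div}$. Every pair of objects has a least common multiple, so this category has all pushouts, and Proposition~\ref{prop:pushouts-as-proxy-pushouts} promotes any spine satisfying~\ref{property:SC1} into a genuine spined-category structure with no further work.

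The central design problem is then to choose the spine $\Omega$ so that two things hold simultaneously: $\omega(\Omega_n) = n$, which matches what any candidate S-functor is forced to do on the spine; and some pair of objects $a, b$ satisfies $\omega(\mathrm{lcm}(a,b)) > \max(\omega(a), \omega(b))$, directly witnessing the failure of property~\ref{property:SF2}. The trick is to make $\Omega_n$ a product of several distinct primes, each appearing with exponent $n$, so that coprime inputs can lcm up to a spine element much larger than either already contains. A concrete choice of the form $\Omega_n = \prod_{p \leq n,\, p\text{ prime}} p^n$ (with minor tweaking at small indices if edge cases demand it) achieves this: $\Omega_n$ is strictly increasing under divisibility, so $\omega(\Omega_n) = n$, and~\ref{property:SC1} follows from the observation that any integer divides $\Omega_m$ as soon as $m$ exceeds both its largest prime factor and its largest exponent.

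For measurability I would offer the ``maximum exponent in the prime factorization'' map as a candidate S-functor: it gives the correct value on each $\Omega_n$, and since the exponent of a prime in $\mathrm{lcm}(a,b)$ is the pointwise max of its exponents in $a$ and $b$, it converts proxy pushouts into maxima as required. Finally, to exhibit the failure of $\omega$ I would test a coprime pair such as $16 = 2^4$ and $81 = 3^4$: each has small generalized clique number (since beyond the trivial spine element no $\Omega_m$ divides a prime power from a single prime once $\Omega_m$ already mixes primes), yet their lcm coincides with $\Omega_4 = 2^4 \cdot 3^4$, whose generalized clique number is $4$, strictly larger than the max of the two. The main obstacle throughout is the spine design itself; once a spine with the right combinatorial structure is fixed, the axiom checks and the counterexample reduce to simple arithmetic.
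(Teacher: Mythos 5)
Your proposal is correct and matches the paper's own argument essentially verbatim: the same divisibility poset $\mathbb{N}_{div}$, the same spine $\Omega_n = \prod_{p \leq n} p^n$, the same maximum-exponent map witnessing measurability, and the same coprime pair $16$, $81$ with $\mathrm{lcm}(16,81) = \Omega_4$ exhibiting the failure of $\omega$ to preserve proxy pushouts. No substantive differences to report.
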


The reader may verify that, unlike the generalized clique number, the order map \textit{does} give rise to an S-functor over the category $\mathbb{N}_{div}$. However this is not true in general.

\begin{proposition}\label{prop:order-not-S-functor}
The order map $X \mapsto |X|$ need not give rise to an S-functor over an arbitrary measurable spined category.
\begin{proof}
The order map does not constitute an S-functor over the measurable spined category $\GRPH_{mono}$. Consider two copies of the graph with two vertices and one edge, glued together along a common vertex. If order was an S-functor, the resulting graph would have only two vertices.
\end{proof}
\end{proposition}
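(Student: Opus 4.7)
The plan is to disprove the claim by exhibiting a single measurable spined category in which the order map fails to be an S-functor. The natural candidate is $\GRPH_{mono}$: Proposition~\ref{prop:S_functions_are_S_functors} already gives us one S-functor on it (e.g.\ the one induced by the clique number, which is known to be an S-function in Halin's sense), so $\GRPH_{mono}$ is measurable. All that remains is to expose an obstruction to $|\cdot|$ preserving proxy pushouts, since the spine is clearly preserved.

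First I would pin down the order map on $\GRPH_{mono}$ explicitly. Since the arrows are injective graph homomorphisms and the spine is $\Omega_n = K_n$, a morphism $G \hookrightarrow K_n$ exists if and only if $|V(G)| \leq n$, because any injection from $V(G)$ into $V(K_n)$ is automatically a graph homomorphism into the complete graph. Hence $|G| = |V(G)|$, and in particular $|\Omega_n| = n$, so property \ref{property:SF1} would hold if we tried to promote $|\cdot|$ to a spinal functor.

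The counterexample to property \ref{property:SF2} is then a minimal clique sum. Take $G_1 = G_2 = K_2$ together with the two monomorphisms $g, h : \Omega_1 = K_1 \hookrightarrow K_2$ picking a single vertex of each copy. By Proposition~\ref{prop:proxy-pushouts-in-grph-mono}, the proxy pushout $\mathfrak{P}(g,h)$ is the clique sum $K_2 \#_{K_1} K_2$, namely the path $P_3$ on three vertices. Thus $|\mathfrak{P}(g,h)| = |V(P_3)| = 3$. On the other hand, in $\nat$ the proxy pushout is the maximum, so if $|\cdot|$ were an S-functor we would need $|\mathfrak{P}(g,h)| = \max\{|K_2|, |K_2|\} = 2$, a contradiction. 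This finishes the argument.

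No step here looks technically difficult; the only thing to be careful about is invoking the right descriptions of order and of the proxy pushout in $\GRPH_{mono}$ so that the failure is manifestly an instance of \ref{property:SF2}, rather than some incidental inequality. The conceptual content is simply that gluing along a proper subobject of the spine creates new vertices that the spine-indexed notion of size cannot ignore, even though an S-functor ought to.
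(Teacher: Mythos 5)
Your proposal is correct and is essentially the paper's own argument: the same counterexample of two copies of $K_2$ glued along a shared vertex in $\GRPH_{mono}$, whose clique sum has three vertices while an S-functor would force $\max\{2,2\}=2$. You merely spell out the details (that $|G|=|V(G)|$ and that measurability comes from the clique number via Proposition~\ref{prop:S_functions_are_S_functors}) that the paper leaves implicit.
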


\section{Tree-width in a measurable spined category}\label{sec:triangulation_number_results}
 
In this section we give an abstract analogue of tree-width in our categorial setting, by proving a theorem in the style of Halin's Theorem \ref{thm:Halin_tw}. To do so, we must find a maximum S-functor (under the point-wise order). An obvious candidate is the map taking every object to its order (Definition \ref{def:order}). However, as we just saw (Proposition \ref{prop:order-not-S-functor}), the order need not constitute an S-functor for measurable spined categories. Thus, rather than trying to define an S-functor via morphisms from objects to elements of the spine, we will consider morphisms to elements of a distinguished class of objects which we call \emph{pseudo-chordal}. These objects will be used to define our abstract analogue of tree-width as an $S$-functor on any measurable spined category. We will conclude the section by showing how our abstract characterization of tree-width allows us to recover the familiar notions of graph and hypergraph tree-width.

\begin{definition}\label{def:pseudo-chordal-object}
We call an object $X$ of a spined category $(\mathcal{C}, \Omega, \mathfrak{P})$ \emph{pseudo-chordal} if for every two S-functors $F,G: \mathcal{C} \rightarrow \nat$ we have $F[X] = G[X]$ (if the spined category is not measurable, then every object is pseudo-chordal).
\end{definition}

\begin{proposition}\label{prop:pseudo-chordals-closed}
The set $Q$ of all pseudo-chordal objects of a spined category $(\mathcal{C}, \Omega, \mathfrak{P})$ contains all objects of the form $\Omega_n$, and is closed under proxy pushouts in the following sense: given two objects $A,B \in Q$ and two arrows $f: \Omega_n \rightarrow A$ and $g: \Omega_n \rightarrow B$, we always have $\mathfrak{P}(f,g) \in Q$.
\begin{proof}
Given two S-functors $F,G$ on $\mathcal{C}$, we always have $F[\Omega_n] = n = G[\Omega_n]$ by Property~\ref{property:SF1}. Moreover, by Property~\ref{property:SF2}, given $A,B \in Q$ and arrows $f: \Omega_n \rightarrow A$ and $g: \Omega_n \rightarrow B$, we have $F[\mathfrak{P}(f,g)] = \max\{F[A].F[B]\} = \max\{G[A].G[B]\} = G[\mathfrak{P}(f,g)]$.
\end{proof}
\end{proposition}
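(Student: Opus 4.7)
The plan is to reduce both claims to the defining properties \ref{property:SF1} and \ref{property:SF2} of spinal functors. The underlying observation is that the target category $\nat$ is rigid enough on spine objects and on proxy pushouts that every S-functor is forced to agree there, so pseudo-chordality propagates from the axioms.

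For the first claim, I would argue that $\Omega_n$ is pseudo-chordal as an immediate consequence of spine preservation. If $F, G \colon \mathcal{C} \to \nat$ are any two S-functors, then $F \circ \Omega^{\mathcal{C}} = \Omega^{\nat} = G \circ \Omega^{\mathcal{C}}$ by Property \ref{property:SF1}. Since $\Omega^{\nat}_n = n$ by Example \ref{example:spined-category-Nat}, we get $F[\Omega_n] = n = G[\Omega_n]$, and therefore $\Omega_n \in Q$ by Definition \ref{def:pseudo-chordal-object}.

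For the closure property, take $A, B \in Q$, arrows $f \colon \Omega_n \to A$, $g \colon \Omega_n \to B$, and two S-functors $F, G$. I would apply Property \ref{property:SF2}, which tells us that $F$ sends the proxy pushout square for $(f, g)$ in $\mathcal{C}$ to a proxy pushout square for $(Ff, Fg)$ in $\nat$, and similarly for $G$. Since proxy pushouts in $\nat$ are given by maxima (Example \ref{example:spined-category-Nat}), this yields $F[\mathfrak{P}(f,g)] = \max\{F[A], F[B]\}$ and $G[\mathfrak{P}(f,g)] = \max\{G[A], G[B]\}$. The hypothesis $A, B \in Q$ then gives $F[A] = G[A]$ and $F[B] = G[B]$, so the two maxima coincide, proving $\mathfrak{P}(f,g) \in Q$.

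The proof is essentially a two-line application of the S-functor axioms; the only pitfall is notational, namely keeping the spine functor $\Omega^{\mathcal{C}}$ distinct from $\Omega^{\nat}$ (the identity on $\mathbb{N}$) and making sure that the proxy pushout in $\nat$ of a span $\max\text{-}\text{into}\ F[A], F[B]$ from $n = F[\Omega_n]$ is indeed $\max\{F[A], F[B]\}$ rather than anything more subtle. Once this is verified, no further work is required.
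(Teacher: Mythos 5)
Your proposal is correct and follows essentially the same route as the paper's own proof: spine preservation (Property~\ref{property:SF1}) forces every S-functor to send $\Omega_n$ to $n$, and preservation of proxy pushouts (Property~\ref{property:SF2}), together with the fact that proxy pushouts in $\nat$ are maxima, forces agreement on $\mathfrak{P}(f,g)$ whenever the S-functors already agree on $A$ and $B$. The extra care you take in distinguishing $\Omega^{\mathcal{C}}$ from $\Omega^{\nat}$ is sound but adds nothing beyond the paper's two-line argument.
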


In light of Proposition \ref{prop:pseudo-chordals-closed}, it is natural to distinguish the smallest set of pseudo-chordal objects that contains the spine and which is closed under proxy pushouts. We call this set the set of \emph{chordal objects}. The name is given in analogy to chordal graphs: a resemblance that is best seen in the following recursive definition of chordal objects.

\begin{definition}\label{def:chordal-object}
We define the set of \textit{chordal objects} of the category spined category $\mathcal{C}$ inductively, as the smallest set $S$ of objects satisfying the following:
\begin{itemize}
    \item $\Omega_n \in S$ for all $n \in \mathbb{N}$, and
    \item $\mathfrak{P}(a,b) \in S$ for all objects $A,B \in S$ and arrows $a: \Omega_n \rightarrow A$ and $b: \Omega_n \rightarrow B$.
\end{itemize}
\end{definition}
 
Note that the notions of chordality and pseudo-chordality are well-defined even in \emph{non-measurable} categories (since every object is pseudo-chordal if the category in question is not measurable).

As an immediate consequence of Proposition \ref{prop:pseudo-chordals-closed} we have the following result.

\begin{corollary}\label{corollary:chordal_implies_pseudochordal}
All chordal objects are pseudo-chordal.
\end{corollary}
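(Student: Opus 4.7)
The plan is to proceed by structural induction on the inductive definition of chordal objects (Definition~\ref{def:chordal-object}). Let $Q$ denote the set of pseudo-chordal objects of the spined category $(\mathcal{C},\Omega,\mathfrak{P})$, and let $S$ denote the set of chordal objects. I aim to show $S \subseteq Q$.

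By Proposition~\ref{prop:pseudo-chordals-closed}, the set $Q$ already satisfies both of the closure properties appearing in Definition~\ref{def:chordal-object}: namely, $\Omega_n \in Q$ for all $n \in \mathbb{N}$, and whenever $A,B \in Q$ together with arrows $f: \Omega_n \to A$ and $g: \Omega_n \to B$, the proxy pushout object $\mathfrak{P}(f,g)$ again lies in $Q$. But Definition~\ref{def:chordal-object} characterizes $S$ as the \emph{smallest} set of objects closed under these two conditions, so $S \subseteq Q$, which is exactly the statement that every chordal object is pseudo-chordal.

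There is no real obstacle here: the substance of the argument is entirely carried by Proposition~\ref{prop:pseudo-chordals-closed}, and the corollary is a bare appeal to the minimality clause in the inductive definition of chordality. The only thing to check carefully is that the two closure conditions match up syntactically, which they do verbatim. Note also that the statement holds trivially in the non-measurable case, since then every object of $\mathcal{C}$ is pseudo-chordal by the convention adopted in Definition~\ref{def:pseudo-chordal-object}, so $S \subseteq Q$ is automatic.
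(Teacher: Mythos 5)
Your argument is correct and coincides with the paper's: Corollary~\ref{corollary:chordal_implies_pseudochordal} is stated there as an immediate consequence of Proposition~\ref{prop:pseudo-chordals-closed}, precisely because the set of pseudo-chordal objects contains the spine and is closed under proxy pushouts, while the chordal objects form the smallest such set. Your appeal to the minimality clause of Definition~\ref{def:chordal-object} is exactly the intended reasoning, and the remark about the non-measurable case is a harmless extra observation.
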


However, note that the converse of Corollary \ref{corollary:chordal_implies_pseudochordal} does not hold; as we shall see, it fails even in $\GRPH_{mono}$.

\begin{proposition}\label{prop:pseudochordal_not_imply_chordal}
Pseudo-chordality does not imply chordality.
\begin{proof}
We will show that, in the spined category $\GRPH_{mono}$, there exists a non-chordal object for which every pair of S-functors agree. To this end, consider, for some $n \in \mathbb{N}$, the element $K_n \#_{K_1} C_n$ obtained by identifying a vertex of an $n$-clique to a vertex of an $n$-cycle. Since $C_n$ is a subgraph of $K_n$, we have a sequence of injective graph homomorphisms $$K_n \hookrightarrow K_n \#_{K_1} C_n \hookrightarrow K_n \#_{K_1} K_n.$$
Thus, for any S-functor $F$,  we have \[n = F[K_n] \leq F[K_n \#_{K_1} C_n] \leq  F[K_n \#_{K_1} K_n] = \max\{ F[K_n], F[K_n] \} = n.\]
\end{proof}
\end{proposition}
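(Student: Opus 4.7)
The plan is to produce a concrete graph in $\GRPH_{mono}$ which every S-functor evaluates to the same number (forcing pseudo-chordality) but which cannot be assembled from cliques by iterated proxy pushouts (ruling out chordality). Mirroring standard graph-theoretic intuition, the natural candidate is $X = K_n \#_{K_1} C_n$ for some $n \geq 4$, obtained by gluing an $n$-cycle to an $n$-clique at a single vertex.

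To handle pseudo-chordality, I would use that every S-functor $F$ takes values in the poset category $\nat$, so each morphism $A \hookrightarrow B$ forces $F[A] \leq F[B]$. Since $C_n$ embeds as a (non-induced) subgraph of $K_n$, there is a chain of monomorphisms $K_n \hookrightarrow X \hookrightarrow K_n \#_{K_1} K_n$. Applying Property~\ref{property:SF1} (spine preservation) at the two ends and Property~\ref{property:SF2} at the right-hand clique sum (which turns a proxy pushout in $\GRPH_{mono}$ into a maximum in $\nat$) sandwiches $n \leq F[X] \leq \max\{n,n\} = n$. Hence $F[X] = n$ regardless of the choice of S-functor $F$, so $X$ is pseudo-chordal.

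For non-chordality, the key move is to prove by induction on Definition~\ref{def:chordal-object} that every categorial chordal object of $\GRPH_{mono}$ is a chordal graph in the classical sense. The base case handles complete graphs, and the inductive step uses the standard fact that a clique sum of two chordal graphs along a common clique remains chordal. Granted this bridge, it suffices to check that $X$ itself is not a chordal graph: the $n$-cycle persists as an \emph{induced} subgraph of $X$, because gluing along a single vertex cannot introduce new edges among the cycle's vertices, and for $n \geq 4$ a chordless $n$-cycle forbids chordality.

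The main obstacle I anticipate is precisely the inductive step of the bridge lemma: one must show that proxy pushouts in $\GRPH_{mono}$ (which at this stage are clique sums over \emph{arbitrary} monic spans out of $K_n$, not just induced embeddings) still produce classically chordal graphs when both inputs are. This hinges on verifying that the image of $K_n$ under an injective homomorphism into a chordal graph is a clique and that clique sums along cliques preserve classical chordality. Once that is pinned down, the remainder of the argument is just the sandwich inequality outlined above combined with the observation that induced $C_n$'s obstruct chordality.
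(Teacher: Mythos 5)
Your proposal is correct and follows essentially the same route as the paper: the same witness $K_n \#_{K_1} C_n$ and the same sandwich of monomorphisms $K_n \hookrightarrow K_n \#_{K_1} C_n \hookrightarrow K_n \#_{K_1} K_n$, combined with spine preservation and proxy-pushout preservation to force $F[K_n \#_{K_1} C_n] = n$ for every S-functor $F$. You in fact go a bit further than the paper, which asserts non-chordality of this object without argument; your bridge lemma (every categorial chordal object of $\GRPH_{mono}$ is classically chordal, since injective homomorphic images of $K_n$ are cliques and clique sums along cliques preserve chordality) together with the explicit restriction $n \geq 4$ supplies exactly the detail the paper leaves implicit.
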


We will use pseudo-chordal objects to define notion of a \emph{pseudo-chordal completion} of an object of a spined category. We point out that the name was given in analogy to the operation of a chordal completion of graphs (i.e. the addition of a set $F$ of edges to some graph $G$ such that the resulting graph $(V(G), E(G) \cup F)$ is chordal).

\begin{definition}
A \textit{pseudo-chordal completion} of an object $X$ of a spined category $(\mathcal{C}, \Omega, \mathfrak{P})$ is an arrow $\delta: X \hookrightarrow H$ for some pseudo-chordal object $H$. If the pseudo-chordal object $H$ is also chordal, then we call $\delta$ a \emph{chordal completion}.
\end{definition}

Note that, for graphs, one can give an alternative definition of the tree-width a graph $G$ as: $\tw(G) = \min \{\omega(H) -1 : H \text{ chordal completion of } G\}$ (where $\omega$ is the clique number) \cite{Diestel2010GraphTheory}. With this in mind, observe that the following definition of the \emph{width of a pseudo-chordal completion} furthers the analogy between our construction and the tree-width of graphs. 

\begin{definition}\label{def:width}
Let $X$ and $F$ be respectively an object and an S-functor in some measurable spined category. The \emph{width} of a pseudo-chordal completion $\delta: X \hookrightarrow H$ of $X$ is the value $F[H]$.
\end{definition}

We point out that, in contrast to the case of graphs, we do not define the width of a pseudo-chordal completion by using the generalized clique number $\omega$. This is because $\omega$ need not be an S-functor in general (by Proposition \ref{prop:generalized-clique-number-not-S-functor}). For clarity we note that the choice of S-functor in Definition \ref{def:width} is inconsequential since every two S-functors agree on pseudo-chordal objects (by the definition of pseudo-chordality). 

\begin{proposition}\label{prop:triangulation-is-s-functorial}
Let $(\mathcal{C}, \Omega, \mathfrak{P})$ be a measurable spined category and denote by $\Delta[X]$  and $\Delta^{ch}[X]$ the minimum possible width of respectively any pseudo-chordal completion of the object $X$ and any chordal completion of $X$. Then $\Delta$ and $\Delta^{ch}$ are functors from $\mathcal{C}$ to $\mathbb{N}_{\leq}$.
\begin{proof}
We only prove the claim for $\Delta$ since the argument for $\Delta^{ch}$ is the same. Let $F$ be any S-functor over $(\mathcal{C}, \Omega, \mathfrak{P})$. We need to verify that, for every arrow $f: X \to Y$ in $\mathcal{C}$, we have $\Delta[X] \leq \Delta[Y]$. To this end take any such arrow $f: X \to Y$ and two minimum-width pseudo-chordal completions $\delta_X :X \to H_X$ and $\delta_Y: Y \to H_Y$ of $X$ and $Y$ respectively. Since $\delta_Y \circ f$ is also a pseudo-chordal completion of $X$ and by the minimality of the width of $\delta$, we have $\Delta[X] = F[H_X] \leq F[H_Y] = \Delta[Y]$.
\end{proof}
\end{proposition}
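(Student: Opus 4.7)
The plan is to show that $\Delta$ and $\Delta^{ch}$ define functors into $\mathbb{N}_{\leq}$, viewed as a thin category. Since $\mathbb{N}_{\leq}$ has at most one morphism between any two objects, any assignment of objects to natural numbers that respects the order on morphisms automatically yields a functor: preservation of identity and composition hold vacuously. Thus it suffices to show that for every arrow $f: X \to Y$ in $\mathcal{C}$, we have $\Delta[X] \leq \Delta[Y]$ (and similarly for $\Delta^{ch}$).

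First I would verify that $\Delta[X]$ and $\Delta^{ch}[X]$ are well-defined for every object $X$. By property \ref{property:SC1}, there exists an arrow $X \to \Omega_n$ for some $n \in \mathbb{N}$. Since each $\Omega_n$ is chordal (by Definition \ref{def:chordal-object}) and hence pseudo-chordal (by Corollary \ref{corollary:chordal_implies_pseudochordal}), this arrow constitutes both a pseudo-chordal and a chordal completion of $X$. Hence the set of widths ranging over all completions of $X$ is a non-empty subset of $\mathbb{N}$, so it has a minimum. Moreover, since $F[H]$ depends only on $H$ and not on the choice of S-functor $F$ whenever $H$ is pseudo-chordal, the values $\Delta[X]$ and $\Delta^{ch}[X]$ are well-defined irrespective of which S-functor is fixed by measurability.

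For monotonicity, given $f: X \to Y$, I would choose a pseudo-chordal completion $\delta_Y: Y \to H_Y$ of $Y$ achieving the minimum, so that $F[H_Y] = \Delta[Y]$. The composite $\delta_Y \circ f: X \to H_Y$ lands in the same pseudo-chordal object $H_Y$, and so by definition it is a pseudo-chordal completion of $X$. Therefore $\Delta[X] \leq F[H_Y] = \Delta[Y]$ by the minimality defining $\Delta[X]$. The identical argument, now with $H_Y$ additionally required to be chordal, yields $\Delta^{ch}[X] \leq \Delta^{ch}[Y]$.

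The step that requires the most care is whether the composite $\delta_Y \circ f$ qualifies as a pseudo-chordal completion in the strictest sense: if one insists that completions be \emph{monomorphisms} (as suggested by the $\hookrightarrow$ notation in the definition), then $\delta_Y \circ f$ may fail to be one when $f$ itself is not monic. Either the definition should be read as permitting arbitrary arrows, or the argument restricts to the subcategory $\mono(\mathcal{C})$; in both readings the proof goes through unchanged. Apart from this interpretive point, the result is essentially a pullback-of-optima argument: optimal completions of the target pull back along $f$ to give completions of the source of no greater width.
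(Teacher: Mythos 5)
Your proof is correct and follows essentially the same route as the paper's: compose a minimum-width (pseudo-)chordal completion of $Y$ with $f$ to obtain a completion of $X$, then invoke minimality to conclude $\Delta[X] \leq \Delta[Y]$, with functoriality into the thin category $\mathbb{N}_{\leq}$ then being automatic. Your additional checks (existence of completions via \ref{property:SC1}, independence from the choice of S-functor, and the remark about the $\hookrightarrow$ convention versus arbitrary arrows $f$) are sound observations that the paper leaves implicit, but they do not change the argument.
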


\begin{definition}\label{def:triangulation-functor}
Let $\Delta$ and $\Delta^{ch}$ be the functors defined in Proposition \ref{prop:triangulation-is-s-functorial}. We call $\Delta$ the \textit{triangulation functor} and $\Delta^{ch}$ the \emph{chordal triangulation functor}. 
\end{definition}

Our goal now is to show that the triangulation functor of a measurable spined category is in fact an S-functor. Specifically we prove our main theorem which states that both $\Delta$ and $\Delta^{ch}$ are S-functors in any measurable spined category. 

\begin{theorem}\label{thm:triang-functor-is-S-functor}
Both the triangulation and chordal-triangulation functors are S-functors in any measurable spined category.
\begin{proof}
Let $(\mathcal{C}, \Omega, \mathfrak{P})$ be any measurable spined category equipped with some S-functor $F$. We will prove the statement only for $\Delta$ since the method of proof for the $\Delta^{ch}$ case is the same.

Consider a pseudo-chordal completion $c: X \to H$ of a pseudo-chordal object~$X$. Then $F[X] \leq F[H]$, and so the identity pseudo-chordal completion of $X$ has smaller width than any other pseudo-chordal completion of $X$. This proves that $\Delta[\Omega_n] = n$ and hence that $\Delta$ satisfies property \ref{property:SC1}. 

For \ref{property:SC2}, consider any span 
\begin{tikzcd}
A & \Omega_n \arrow[l, "a"'] \arrow[r, "b"] & B
\end{tikzcd} in $\mathcal{C}$. We have to prove that $\Delta[\mathfrak{P}(a,b)] = \max \{\Delta[A], \Delta[B]\}$. Choose a pseudo-chordal completion $\alpha: A \to H_A$ (resp. $\beta: B \to H_B$) for which $F[H_A]$ (resp. $F[H_B]$) is minimal. Using property \ref{property:SC2}, there is a unique arrow $(\alpha, \beta): \mathfrak{P}(a, b) \to \mathfrak{P}(\alpha a, \beta b)$ such that the following diagram commutes.
\begin{center}
\begin{tikzcd}
\Omega_n \arrow[r, "a"] \arrow[d, "b"'] & A \arrow[r, "\alpha"] \arrow[d]                            & H_A \arrow[dd]                    \\
B \arrow[d, "\beta"'] \arrow[r]         & {\mathfrak{P}(a,b)} \arrow[rd, "{(\alpha,\beta)}", dashed] &                                   \\
H_B \arrow[rr]                          &                                                            & {\mathfrak{P}(\alpha a, \beta b)}
\end{tikzcd}
\end{center}
Now take a pseudo-chordal completion $\delta: \mathfrak{P}(a, b) \to H$ of $\mathfrak{P}(a, b)$ for which the quantity $F[H]$ is minimal. Consider the following diagram.
\begin{center}
% https://tikzcd.yichuanshen.de/#N4Igdg9gJgpgziAXAbVABwnAlgFyxMJZABgBpiBdUkANwEMAbAVxiRAB12B5AWxgHM6AfUIBfUuky58hFAEZScqrUYs2AIRDjJ2PASIKAzMvrNWiEAAkhm7SAy6ZRAEzkTq8yACCWifal6ssiGiu5mbJw8dDgAFgBmAE50ANbAAAqiABR0pABGAJS+OtL6KK7OYWoW1j52DiVBIcbUplUc7FGxiSnpWZyMaDF0AAQ5w5y5MDgjBYV1AU4oACykFS0ebJZF-o6lyAoArJWeAGLI1uoUwwC84+wAIjAM08iX2-WBLqRLx2xnNVdbpxHs86MgvBR3gs9mQfutwhYxH4PotkCs4SoESB-oC7iCXpFovEkqkMtk8vl8pD5rtGqQjvC2mdCV0Sb1Mv0GIMRmMJlMZlSobSiGQ1pi2pxeAJhGJlDAoPx4ERQIkIDwkGQQDgIEgQiAGHRJgw0tDZPqYHEcCBGZ46NtVerEJrtUgFPrDU8TcKLAwLVabWxcvaEmrddQXYglnYHa7wzrI9GQ469RGDgGLJzucHQ4g3RHXOLPHzptnHWmtfGAGzUSZgKAa9PtTrEnpkzNDUakO6TabDApCYDtnmia3uo1ehpsLBgbCwUtIAsR6sgWv1p2Nlkt0l9dgDDu89g9gUD4szEeJnMrCtIADsG-YsFB88QAE4466C6uG4W-gAVZ8ABzvrmn4wHW36tKcVoXo6cjOvGQE-hYJwcoe-J9oU1AGuOppsAkWD8DE0F+DGubwUgb5IdiqF7jycwkUmSBXkujY0VyQxdqemErmBa6athnq4RY+GEf6+rTp4UB0HAMTys+ciLvGchul+iAALSGJqkERB0TBCHIo4CcaQkgCJRGGRJbBQBAOA4HJMGunmSmgeB6mafePB6c4hkesZ3qmQR5lYZZFjWbZ9kMTm5G5nqsl0Gu4AEKwHlErkcTAFgUCiCe3B8IIIjnpFsEpvG5ZGROnzCYFYnaRmu7sTy8klQu95Hn2Pk4f5ZnQRQohAA
\begin{tikzcd}
\Omega_n \arrow[rr, "a"] \arrow[rd, "b"] \arrow[dd, "\mathbf{id}_{\Omega_n}", no head] &                                                               & A \arrow[rd, "{\mathfrak{P}(a,b)_{a}}" description] \arrow[dd, "\alpha", near start]                                          &                                                                                   &                                                                              \\
                                                                                       & B \arrow[rr, "{\mathfrak{P}(a,b)_{b}}", near start, crossing over]                              &                                                                            & {\mathfrak{P}(a,b)} \arrow[rd, "\delta"] \arrow[dd, "{(\alpha, \beta)}"', dashed] &                                                                              \\
\Omega_n \arrow[rr, "\alpha a"', near end] \arrow[rd, "\beta b"']                                &                                                               & H_A \arrow[rd, "{\mathfrak{P}(\alpha a, \beta b)_{\alpha a}}" description] &                                                                                   & H                                                                            \\
                                                                                       & H_B \arrow[from=uu, "\beta", near start, crossing over] \arrow[rr, "{\mathfrak{P}(\alpha a, \beta b)_{\beta b}}", near start] &                                                                            & {\mathfrak{P}(\alpha a, \beta b)}                                                &                                                                              \\
n \arrow[rd, "F(\beta b)"'] \arrow[rr, "F(\alpha a)"]                                  &                                                               & {F[H_A] = \Delta[A]} \arrow[rd, "F{\mathfrak{P}(\alpha a, \beta b)_{\alpha a}}" description]                                      &                                                                                   & {F[H] = \Delta[\mathfrak{P}(a,b)]} \arrow[ld, "\mu_2"', dotted, bend right] \\
                                                                                       & {F[H_B] = \Delta[B]} \arrow[rr, "F{\mathfrak{P}(\alpha a, \beta b)_{\beta b}}"]                         &                                                                            & {F[\mathfrak{P}(\alpha a, \beta b)]} \arrow[ru, "\mu_1"', dotted, bend right]     &                                                                             
\end{tikzcd}
\end{center}
To show that $\Delta[\mathfrak{P}(a,b)] = \max \{\Delta[A], \Delta[B]\}$, it suffices to deduce the existence of the dotted arrows $\mu_1$ and $\mu_2$ in the diagram above. 

Note that, since $F$ is an S-functor, the bottom square (which is a diagram in $\nat$) commutes and $\mathfrak{P}(\alpha a, \beta b) = \max \{F[H_A], F[H_B]\}$. Since $\delta \circ \mathfrak{P}(a,b)_{a}$ constitutes a pseudo-chordal completion of $A$ and since we chose $H$ so that $F[H]$ is minimal, we have $F[H_A] \leq F[H]$. Similarly we can deduce $F[H_B] \leq F[H]$. Thus we have
$$ F[\mathfrak{P}(\alpha a, \beta b)] = \max \{F[H_A], F[H_B]\} \leq F[H], $$
which proves the existence of $\mu_1$. 

By Proposition \ref{prop:pseudo-chordals-closed}, we know that the set of pseudo-chordal objects is closed under proxy pushouts. Since $H_A$ and $H_B$ are pseudo-chordal, so is their proxy pushout $\mathfrak{P}(\alpha a, \beta b)$. Hence $(\alpha, \beta): \mathfrak{P}(a, b) \to \mathfrak{P}(\alpha a, \beta b)$ is a pseudo-chordal completion of $\mathfrak{P}(a, b)$. However, so is $H$. In fact we chose $H$ so that $F[H]$ is minimal (since $F[H] = \Delta[\mathfrak{P}(a,b)]$). Thus we have $F[\mathfrak{P}(\alpha a, \beta b)] \geq F[H]$, which proves the existence of $\mu_2$.
\end{proof}
\end{theorem}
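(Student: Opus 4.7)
The plan is to verify that $\Delta$ and $\Delta^{ch}$ satisfy the two defining properties of a spinal functor into $\nat$: preservation of the spine (\ref{property:SF1}) and preservation of proxy pushouts (\ref{property:SF2}). I would focus on $\Delta$, since the argument for $\Delta^{ch}$ runs identically once one replaces Proposition~\ref{prop:pseudo-chordals-closed} by the closure clause built into Definition~\ref{def:chordal-object}. Throughout, fix an auxiliary S-functor $F$ (guaranteed by measurability), used only to compute widths; because every two S-functors agree on pseudo-chordal objects, the particular choice of $F$ will not matter.

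For \ref{property:SF1}, I would note that $\Omega_n$ is itself chordal, hence pseudo-chordal by Corollary~\ref{corollary:chordal_implies_pseudochordal}. Thus the identity arrow $\id_{\Omega_n}$ is a pseudo-chordal completion of $\Omega_n$ of width $F[\Omega_n] = n$. Conversely, for any pseudo-chordal completion $c: \Omega_n \to H$, functoriality of $F$ and the fact that $\nat$ is a poset give $n = F[\Omega_n] \leq F[H]$. Hence $\Delta[\Omega_n] = n$.

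For \ref{property:SF2}, fix a span $A \xleftarrow{a} \Omega_n \xrightarrow{b} B$; I would prove $\Delta[\mathfrak{P}(a,b)] = \max\{\Delta[A], \Delta[B]\}$ by two inequalities. For the upper bound, pick minimum-width pseudo-chordal completions $\alpha: A \to H_A$ and $\beta: B \to H_B$. By Proposition~\ref{prop:pseudo-chordals-closed}, the proxy pushout $\mathfrak{P}(\alpha a,\beta b)$ is again pseudo-chordal. Property~\ref{property:SC2} supplies a canonical arrow $(\alpha,\beta): \mathfrak{P}(a,b) \to \mathfrak{P}(\alpha a, \beta b)$, which is therefore a pseudo-chordal completion of $\mathfrak{P}(a,b)$. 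Applying \ref{property:SF2} to $F$ (whose codomain has proxy pushouts given by maxima) yields $F[\mathfrak{P}(\alpha a, \beta b)] = \max\{F[H_A], F[H_B]\} = \max\{\Delta[A], \Delta[B]\}$, bounding $\Delta[\mathfrak{P}(a,b)]$ from above. For the lower bound, given any pseudo-chordal completion $\delta: \mathfrak{P}(a,b) \to H$, the composites $\delta \circ \mathfrak{P}(a,b)_a$ and $\delta \circ \mathfrak{P}(a,b)_b$ are pseudo-chordal completions of $A$ and $B$ respectively, each of width $F[H]$. Minimality of $\Delta[A]$ and $\Delta[B]$ then forces $\max\{\Delta[A], \Delta[B]\} \leq F[H]$, and taking the infimum over $\delta$ closes the argument.

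The main subtlety I expect is conceptual rather than technical: one must not conflate the role of $F$ (an arbitrary but fixed S-functor used as a width-measuring device) with the S-functor one is trying to construct ($\Delta$ itself). This is defused precisely by Proposition~\ref{prop:pseudo-chordals-closed}, which guarantees that every intermediate object appearing in the argument (the $H_A$, $H_B$, $\mathfrak{P}(\alpha a, \beta b)$, and $H$) is pseudo-chordal, so that the numerical value $F[-]$ on each of them is canonical and independent of $F$. Once this is noted, both bounds reduce to routine diagram chases using \ref{property:SC2} and the preservation property of the auxiliary $F$.
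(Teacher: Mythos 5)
Your proposal is correct and takes essentially the same route as the paper: spine preservation via the identity completion of the (pseudo-)chordal $\Omega_n$, and proxy-pushout preservation via exactly the two bounds the paper establishes --- the upper bound from the completion $(\alpha,\beta):\mathfrak{P}(a,b)\to\mathfrak{P}(\alpha a,\beta b)$ using Proposition~\ref{prop:pseudo-chordals-closed} and \ref{property:SF2} for $F$, and the lower bound from the composites $\delta\circ\mathfrak{P}(a,b)_a$ and $\delta\circ\mathfrak{P}(a,b)_b$ being completions of $A$ and $B$. The paper merely packages these same two inequalities as the existence of the arrows $\mu_1,\mu_2$ in $\nat$ within one large diagram.
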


Surprisingly, the S-functors $\Delta$ and $\Delta^{ch}$ defined above coincide. 

\begin{corollary}\label{corollary:Delta_and_Delta_chord_are_same}
In any measurable spined category we have $\Delta = \Delta^{ch}$.
\begin{proof}
Consider any measurable spined category $(\mathcal{C}, \Omega, \mathfrak{P})$ equipped with an S-functor $F$ and let $X$ be an object in $\mathcal{C}$. Since every chordal object is also pseudo-chordal (Corollary \ref{corollary:chordal_implies_pseudochordal}) we know that $\Delta[X] \leq \Delta^{ch}[X]$. We now show that given any minimum-width pseudo-chordal completion $\delta: X \rightarrow H$ of $X$, we can find a chordal completion of $X$ of the same width as $\delta$. 

Let $\gamma: H \to H^{ch}$ be a minimum-width chordal completion of $H$. Since $H$ is pseudo-chordal, all S-functors take the same value on $H$. In particular this means that $\Delta[H] = \Delta^{ch}[H]$ since both $\Delta$ and $\Delta^{ch}$ are S-functors by Theorem \ref{thm:triang-functor-is-S-functor}. Thus we have $F[H] = \Delta[H] = \Delta^{ch}[H] = F[H^{ch}]$. But then $\gamma \circ \delta$ is a chordal completion of $X$ with width $F[H^{ch}] = F[H]$, as desired.
\end{proof}
\end{corollary}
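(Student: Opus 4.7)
The $\leq$ direction is essentially free: by Corollary~\ref{corollary:chordal_implies_pseudochordal}, every chordal completion $\delta : X \hookrightarrow H$ is also a pseudo-chordal completion of the same width $F[H]$, so the infimum defining $\Delta[X]$ ranges over a superset of the arrows defining $\Delta^{ch}[X]$. Hence $\Delta[X] \leq \Delta^{ch}[X]$ for every object $X$, and the work is concentrated in the reverse inequality.

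For the reverse inequality, the plan is to take a minimum-width pseudo-chordal completion $\delta : X \hookrightarrow H$ realising $F[H] = \Delta[X]$, and then ``chordalise'' the target $H$ without paying any extra width. The key observation is that $H$ is itself a pseudo-chordal object, so by the definition of pseudo-chordality every S-functor returns the same value on $H$. Since Theorem~\ref{thm:triang-functor-is-S-functor} tells us that both $\Delta$ and $\Delta^{ch}$ are S-functors (and since $F$ is too), we obtain the chain
\[
F[H] \;=\; \Delta[H] \;=\; \Delta^{ch}[H].
\]
Unpacking the right-hand side, there must exist a minimum-width chordal completion $\gamma : H \hookrightarrow H^{ch}$ with $F[H^{ch}] = \Delta^{ch}[H] = F[H]$.

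Finally, one composes: $\gamma \circ \delta : X \hookrightarrow H^{ch}$ is a morphism into a chordal object (and is monic, as monomorphisms are closed under composition), so it is a chordal completion of $X$. Its width is $F[H^{ch}] = F[H] = \Delta[X]$, which witnesses $\Delta^{ch}[X] \leq \Delta[X]$ and completes the proof.

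\textbf{Main obstacle.} The only subtle point is recognising that the identity arrow on a pseudo-chordal object is itself a pseudo-chordal completion, which is what forces $\Delta[H] = F[H]$ and makes the equality $\Delta[H] = \Delta^{ch}[H]$ meaningful; with that in hand the argument is essentially a one-line application of Theorem~\ref{thm:triang-functor-is-S-functor} combined with the definition of pseudo-chordality. As in the proof of the main theorem, we tacitly assume that the minimum-width pseudo-chordal and chordal completions in question actually exist — an implicit hypothesis inherited from the surrounding framework rather than an obstacle of this corollary itself.
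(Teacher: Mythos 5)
Your proof is correct and follows essentially the same route as the paper: the easy inequality via Corollary~\ref{corollary:chordal_implies_pseudochordal}, then composing a minimum-width pseudo-chordal completion $\delta: X \hookrightarrow H$ with a minimum-width chordal completion $\gamma: H \hookrightarrow H^{ch}$, using pseudo-chordality of $H$ together with Theorem~\ref{thm:triang-functor-is-S-functor} to get $F[H] = \Delta[H] = \Delta^{ch}[H] = F[H^{ch}]$. Your added remarks (monics compose, the identity completion) are harmless refinements of the same argument.
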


The S-functor $\Delta$ constructed above satisfies a maximality property broadly analogous to  Theorem~\ref{thm:Halin_tw}.

\begin{theorem}\label{thm:S_funtor_lattice}
Let $(\mathcal{C}, \Omega, \mathfrak{P})$ be any measurable spined category. The set of all S-functors over $(\mathcal{C}, \Omega, \mathfrak{P})$ is a join semi-lattice under the pointwise ordering with $\Delta$ as its maximum element.
\end{theorem}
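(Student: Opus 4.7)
The plan is to establish three facts in order: that pointwise $\leq$ yields a partial order on the collection of S-functors, that any two S-functors admit a pointwise maximum that is again an S-functor, and that $\Delta$ dominates every S-functor. The first is immediate from the corresponding properties of $\leq$ on $\mathbb{N}$, inherited objectwise.

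For the existence of binary joins, given S-functors $F, G : \mathcal{C} \to \nat$, I would define $F \vee G$ by $(F \vee G)[X] := \max\{F[X], G[X]\}$ on objects, and as the corresponding unique arrow in $\nat$ on morphisms. Functoriality follows because $F$ and $G$ are both functors into $\nat$ and $\max$ is monotone in each argument. Spine preservation is trivial: $(F \vee G)[\Omega_n] = \max\{n,n\} = n$. For preservation of proxy pushouts, given a span $A \leftarrow \Omega_n \rightarrow B$ with legs $a$ and $b$, I would invoke Property~\ref{property:SF2} for $F$ and $G$ separately together with the associativity and commutativity of $\max$ to compute
\[
(F \vee G)[\mathfrak{P}(a,b)] = \max\{F[A], F[B], G[A], G[B]\} = \max\{(F \vee G)[A], (F \vee G)[B]\}.
\]
Thus $F \vee G$ is an S-functor, and it is clearly the pointwise least upper bound of $F$ and $G$ in the resulting poset.

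For maximality of $\Delta$, let $F$ be an arbitrary S-functor and $X$ an arbitrary object of $\mathcal{C}$. I would choose a minimum-width pseudo-chordal completion $\delta : X \hookrightarrow H$ of $X$. Since $H$ is pseudo-chordal, Definition~\ref{def:pseudo-chordal-object} guarantees that every pair of S-functors takes the same value on $H$; in particular $F[H]$ coincides with the width of $\delta$ as computed via any chosen S-functor (cf.\ the remark following Definition~\ref{def:width}), so $F[H] = \Delta[X]$. But $\delta$ is a morphism of $\mathcal{C}$, and $F$ is a functor into the poset category $\nat$, so functoriality gives $F[X] \leq F[H] = \Delta[X]$. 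Combined with Theorem~\ref{thm:triang-functor-is-S-functor}, which shows that $\Delta$ itself lies in the semi-lattice, this establishes $\Delta$ as the maximum element.

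The main obstacle is conceptual rather than technical: one has to recognise that the very definition of pseudo-chordality --- being an object on which all S-functors necessarily agree --- is precisely what forces $F[H] = \Delta[X]$ at the witnessing completion. Once this observation is in place, the domination step is just functoriality, and the join construction is a routine distribution of $\max$ over the proxy pushout axiom. The heavy lifting has all been done in Theorem~\ref{thm:triang-functor-is-S-functor}.
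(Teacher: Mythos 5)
Your proposal is correct and follows essentially the same route as the paper: joins are obtained by taking pointwise maxima and verifying spine and proxy-pushout preservation via Property~\ref{property:SF2}, and maximality of $\Delta$ follows by applying functoriality of an arbitrary S-functor to a minimum-width pseudo-chordal completion $\delta: X \hookrightarrow H$ and using that all S-functors agree on the pseudo-chordal object $H$. The only (harmless) difference is that the paper constructs suprema of arbitrary non-empty families of S-functors (using the bound $F'[X] \leq |X|$), whereas you construct binary joins, which is all the stated join semi-lattice structure requires.
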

\begin{proof}
Let $\mathcal{Z}$ be any non-empty (possibly infinite) subset of the set of S-functors over $(\mathcal{C}, \Omega, \mathfrak{P})$. In what follows we shall first construct the supremum of $\mathcal{Z}$ and then we will prove that it constitutes an S-functor.

Define the map $F_{\mathcal{Z}} : \mathcal{C} \to \mathbb{N}$ for any $W$ in $\mathcal{C}$ as $F_{\mathcal{Z}}[W] := \max_{F' \in \mathcal{Z}} F'[W]$. (Note that this maximum always exists since every object $X$ is mapped by any S-functor to at most the value of $|X|$ and hence $\{F'[X] : F' \in \mathcal{Z}\}$ is a bounded set of integers.)

We claim that, for any arrow $m: X \to Y$ in $\mathcal{C}$, we have $F_{\mathcal{Z}}[X] \leq F_{\mathcal{Z}}[Y]$. To see this, let $Q$ be an element of $\mathcal{Z}$ such that $Q[X] = F_\mathcal{{Z}}[X]$ (by the definition of $F_{\mathcal{Z}}$ and since $\mathcal{Z}$ is non-empty, such a $Q$ always exists). The functoriality of $Q$ implies that, if there is an arrow $X \to Y$ in $\mathcal{C}$, then $Q[X] \leq Q[Y]$; in particular we can deduce that
\[F_{\mathcal{Z}}[X] = Q[X] \leq Q[Y] \leq \max_{F' \in \mathcal{Z}} F'[Y] = F_{\mathcal{Z}}[Y].\]
Hence there is an arrow $g: F_{\mathcal{Z}}[X] \to F_{\mathcal{Z}}[Y]$ in $\nat$, which means that we can (slightly abusing notation) render $F_{\mathcal{Z}}$ a functor by extending the definition of $F_{\mathcal{Z}}$ to map any arrow $m: X \to Y$ to the arrow $g: F_{\mathcal{Z}}[X] \to F_{\mathcal{Z}}[Y]$ in $\nat$. 

From what we showed above, we know that $F_{\mathcal{Z}}$ is a functor. Now we will show that it is spinal functor. Note that $F_{\mathcal{Z}}$ clearly preserves the spine; furthermore, for any span \begin{tikzcd} A &\Omega_n \arrow[l, "a"'] \arrow[r, "b"] & B\end{tikzcd}, we have 
\begin{align*}
F_{\mathcal{Z}}[\mathfrak{P}(a,b)] &= \max_{F' \in \mathcal{Z}} F'[\mathfrak{P}(a,b)]  &\text{(by the definition of } F_{\mathcal{Z}}) \\ &= \max_{F' \in \mathcal{Z}} \max \{F'[A], F'[B]\} &\text{(since } F' \text{ is an S-functor)} \\
&= \max \{ F_{\mathcal{Z}}[A], F_{\mathcal{Z}}[B]\}.
\end{align*}
Thus $F_{\mathcal{Z}}$ is an S-functor since it satisfies Properties \ref{property:SF1} and \ref{property:SF2}.  In particular we have proved that the set of all S-functors over $(\mathcal{C}, \Omega, \mathfrak{P})$ is a join semi-lattice under the point-wise ordering. 

To see that $\Delta$ is the largest element of this semi-lattice, take any pseudo-chordal completion $\delta: X \to H$ of some object $X$. For any $S$-functor $F$, the following diagram commutes (by functoriality). 
\begin{center}\begin{tikzcd}
        X \arrow[d, "F"'] \arrow[r, "\delta"] & H \arrow[d, "F"] \\
        F[X] \arrow[r, "F_\delta"']       & F[H]      \end{tikzcd}\end{center}
But since $\Delta[X] := F[H]$, we have $F[X] \leq \Delta[X]$ and hence $\Delta$ is the maximum element of the join semi-lattice of S-functors.
\end{proof}

\section{Abstract analogues of tree-width.}\label{sec:examples}
In this section we will find instantiations of spined categories such that their triangulation numbers recover tree-width, hypergraph tree-width and complemented tree-width (i.e. the invariant $G \mapsto \tw(\overline{G}) + 1$) . 

\subsection{Tree-width of graphs and hypergraphs}
Earlier we showed (Proposition \ref{prop:S_functions_are_S_functors}) that every S-function yields an S-functor over $\GRPH_{mono}$. The next result goes further than this and shows that the triangulation functor on $\GRPH_{mono}$ takes every graph $G$ to $\tw(G) + 1$.

\begin{corollary}\label{corollary:spinal-tree-width-homo}
Let $\Delta$ be the triangulation functor of $\GRPH_{mono}$. Then, for any graph $G$, we have $\Delta[G] = \tw(G) + 1$.
\begin{proof}
In $\GRPH_{mono}$ the generalized clique-number agrees with the clique number. Hence we compute 
\begin{align*}
    \tw(G) + 1 &= \min \{\omega(H) : H \text{ is a chordal completion of } G\} \text{ (see \cite{Diestel2010GraphTheory})}\\
    &= \Delta^{ch}[G] \text{ (since } \omega 
    \text{ is an S-functor in } \GRPH_{mono}) \\
    &= \Delta[G] \text{ (by Corollary \ref{corollary:Delta_and_Delta_chord_are_same})}.
\end{align*}
\end{proof}
\end{corollary}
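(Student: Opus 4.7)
The plan is to reduce the claim to the classical characterization of tree-width as $\tw(G)+1 = \min\{\omega(H) : H \text{ a chordal completion of } G\}$, where chordal completion is meant in the usual graph-theoretic sense. The pipeline is: (i) replace $\Delta$ by $\Delta^{ch}$ using Corollary~\ref{corollary:Delta_and_Delta_chord_are_same}; (ii) identify the abstract notion of chordal object in $\GRPH_{mono}$ with the graph-theoretic notion of chordal graph; (iii) choose the S-functor used to measure width in Definition~\ref{def:width} to be the clique number, which is an S-functor by (the remark following) Proposition~\ref{prop:S_functions_are_S_functors}, and observe that in $\GRPH_{mono}$ it coincides with the generalized clique number $\omega$.

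For step (ii), I would argue as follows. By Definition~\ref{def:chordal-object}, the chordal objects of $\GRPH_{mono}$ form the smallest class containing every $K_n$ and closed under clique sums $G \#_{K_n} H$ along complete graphs (which, by Proposition~\ref{prop:proxy-pushouts-in-grph-mono}, are the proxy pushouts in $\GRPH_{mono}$). This is exactly the classical inductive description of chordal graphs: a graph is chordal if and only if it can be assembled from complete graphs by iterated clique sums along complete separators. Thus the chordal completions of $G$ in the sense of Definition~\ref{def:chordal-object} are precisely its chordal completions in the classical sense; in particular, every graph admits at least one such completion (e.g.\ $K_{|V(G)|}$), so $\Delta^{ch}[G]$ is well defined.

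For step (iii), I would note that a morphism $K_n \hookrightarrow H$ in $\GRPH_{mono}$ is precisely an injective homomorphism, which witnesses an $n$-clique inside $H$. Hence the largest $n$ with $K_n \hookrightarrow H$ is the classical clique number $\omega(H)$, so the generalized clique number coincides with $\omega$ on $\GRPH_{mono}$. Since $\omega$ is an S-functor, Definition~\ref{def:width} (read with $F = \omega$) says that the width of a chordal completion $\delta : G \hookrightarrow H$ is $\omega(H)$, so
\[
\Delta^{ch}[G] = \min \{ \omega(H) : H \text{ is a chordal completion of } G \} = \tw(G) + 1,
\]
and Corollary~\ref{corollary:Delta_and_Delta_chord_are_same} gives $\Delta[G] = \Delta^{ch}[G]$, completing the argument.

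The main obstacle I anticipate is step (ii): the equivalence between the categorically generated chordal objects and the graph-theoretic chordal graphs. One direction (every iterated clique-sum of cliques is chordal, since chordal graphs are closed under clique sums) is straightforward; the other direction (every chordal graph arises this way) is exactly the classical tree-decomposition-based structure theorem for chordal graphs, which I would cite from Diestel~\cite{Diestel2010GraphTheory} rather than reprove. Everything else is routine bookkeeping on definitions once that identification is in hand.
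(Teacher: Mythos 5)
Your proposal is correct and follows essentially the same route as the paper: pass from $\Delta$ to $\Delta^{ch}$ via Corollary~\ref{corollary:Delta_and_Delta_chord_are_same}, measure width with the clique number $\omega$ (an S-functor on $\GRPH_{mono}$ that coincides with the generalized clique number there), and invoke the classical identity $\tw(G)+1=\min\{\omega(H): H \text{ a chordal completion of } G\}$. The only difference is that you spell out the identification of the categorically generated chordal objects with classical chordal graphs (via the clique-sum structure theorem), a step the paper leaves implicit, which is a reasonable bit of added care rather than a different argument.
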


Next we consider the category $\HGr_{mono}$ of hypergraphs and their injective homomoprhisms which we describe now. Let $H_1$ and $H_2$ be hypergraphs; a vertex map $h: V(H_1) \to V(H_2)$ is a \emph{hypergraph homomorphism} if it preserves hyper-edges; that is to say that, for every edge $F \in E(H_1)$, the set $h(F) := \{h(x) : x \in F\}$ is a hyper-edge in $H_2$. Hypergraph homomorphisms clearly compose associatively, thus we can define the category $\HGr_{mono}$ which has finite hypergraphs as objects and injective hypergraph homomorphisms as arrows.

\begin{theorem}\label{thm:hypergraphs}
Let $\Omega : \mathbb{N}_{=} \to \mathcal \HGr$ be the functor taking every integer $n$ to the hypergraph $([n], 2^{[n]})$ and let $\mathfrak{P}$ assign to each span of the form
\begin{tikzcd}
H_1 & \Omega_n \arrow[l, "h_1"'] \arrow[r, "h_2"] & H_2
\end{tikzcd} in $\HGr$ the cocone \begin{tikzcd}
H_1 \arrow[rr, "{\mathfrak{P}(h_1,h_2)_{h_1}}"']&& \mathfrak{P}(h_1, h_2) && H_2 \arrow[ll, "{\mathfrak{P}(h_1,h_2)_{h_2}}"']
\end{tikzcd} where
\[\mathfrak{P}(h_1, h_2) := \Bigl( \bigl( V(H_1) \uplus V(H_2) \bigr )/_{h_1 = h_2}, \; \: \bigl( E(H_1) \uplus E(H_2) \bigr )/_{h_1 = h_2} \Bigr)\]
and $\mathfrak{P}(h_1,h_2)_{h_i}$ is the map taking every vertex $v$ in $H_i$ to $v_i$ in $\mathfrak{P}(h_1,h_2)$. 
Then the triple $(\HGr, \Omega, \mathfrak{P})$ is a spined category.
\begin{proof}
Clearly Property \ref{property:SC1} is satisfied, so, to show Property \ref{property:SC2}, consider the following diagram in $\HGr$ (we will argue for the existence and uniqueness of $(j_1, j_2)$.
\begin{center}
\begin{tikzcd}
\Omega_n \arrow[rr, "h_1"] \arrow[d, "h_2"]                         &  & H_1 \arrow[r, "j_1"] \arrow[d, "{{\mathfrak{P}(h_1,h_2)_{h_1}}}"] & J_1 \arrow[dd]                  \\
H_2 \arrow[d, "j_2"] \arrow[rr, "{{\mathfrak{P}(h_1, h_2)_{h_2}}}"] &  & {\mathfrak{P}(h_1,h_2)} \arrow[rd, "{(j_1,j_2)}", dashed]         &                                 \\
J_2 \arrow[rrr]                                                     &  &                                                                   & {\mathfrak{P}(j_1 h_1,j_2 h_2)}
\end{tikzcd}
\end{center}
We define $(j_1,j_2): \mathfrak{P}(h_1, h_2) \to \mathfrak{P}(j_1, j_2)$ as 
\[(j_1,j_2)(x) := 
    \begin{cases}
    j_1(x) \text{ if } x \in V(H_1) \cap \mathfrak{P}(h_1, h_2) \\
    j_2(x) \text{ otherwise}.
    \end{cases}
\]
Clearly $(j_1,j_2)$ is the unique injective vertex-map making the diagram commute (this can be easily seen by considering the forgetful functor taking every hypergraph to its vertex-set). Furthermore, by recalling the definition of the proxy pushout, one can easily see that it is in fact an injective hypergraph homomorphism, as desired.
\end{proof}
\end{theorem}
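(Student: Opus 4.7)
The plan is to verify the two spined-category axioms in turn, working primarily at the level of vertex sets. Observe first that the forgetful functor $U: \HGr_{mono} \to \mathbf{Set}_{mono}$ is faithful, so any morphism in $\HGr_{mono}$ is completely determined by its action on vertices; this reduces uniqueness questions to set-theoretic uniqueness.

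For \ref{property:SC1}, note that the spine object $\Omega_n = ([n], 2^{[n]})$ contains every possible hyper-edge on its vertex set. Given any finite hypergraph $X$, choose $n = |V(X)|$ and any bijection $V(X) \to [n]$. Because $2^{[n]}$ includes every subset of $[n]$, this vertex map automatically preserves (and reflects) hyper-edges, so it constitutes an injective hypergraph homomorphism $X \hookrightarrow \Omega_n$.

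For \ref{property:SC2}, given the span and morphisms $j_1: H_1 \to J_1$, $j_2: H_2 \to J_2$, I would define the candidate mediating arrow $(j_1, j_2): \mathfrak{P}(h_1, h_2) \to \mathfrak{P}(j_1 h_1, j_2 h_2)$ piecewise on vertices: send a class $[v]$ to $[j_1(v)]$ if $v \in V(H_1)$ and to $[j_2(v)]$ if $v \in V(H_2)$. Well-definedness on the shared vertices (i.e.\ those in the image of $\Omega_n$) follows from the commutativity of the upper-left square: if $v = h_1(\omega)$ and $v = h_2(\omega)$ for some $\omega \in \Omega_n$, then the two candidate outputs agree by the definition of the outer proxy pushout's identification. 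Because this prescription is forced by the commutativity requirements and morphisms are determined by vertex action, uniqueness is automatic.

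What remains is to check that this vertex map actually lives in $\HGr_{mono}$, i.e.\ that it is an injective hypergraph homomorphism. Hyper-edge preservation follows because every hyper-edge of $\mathfrak{P}(h_1, h_2)$ is the image of a hyper-edge of either $H_1$ or $H_2$, and $j_i$ preserves these into $J_i$, which in turn embeds into $\mathfrak{P}(j_1 h_1, j_2 h_2)$ by the construction of the proxy pushout in the target. Injectivity reduces to a case split: if $(j_1, j_2)([u]) = (j_1, j_2)([v])$ with $u, v \in V(H_1)$, use injectivity of $j_1$; symmetric for $H_2$; and if $u \in V(H_1) \setminus \mathrm{im}(h_1)$ and $v \in V(H_2) \setminus \mathrm{im}(h_2)$, then $j_1(u)$ and $j_2(v)$ cannot be identified in the target proxy pushout, since that identification only merges vertices in the image of $\Omega_n$. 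The main obstacle is simply the bookkeeping around the two nested quotients, but because the identification is along the concrete spine hypergraph (and the spine itself is identified coherently via $U\Omega_n = [n]$), the casework is routine rather than subtle.
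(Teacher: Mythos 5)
Your proposal is correct and follows essentially the same route as the paper's proof: defining the mediating arrow piecewise on vertex classes, invoking the faithful forgetful functor to vertex sets to force uniqueness, and then checking injectivity and hyper-edge preservation directly from the quotient construction. You simply spell out details (the SC1 embedding into $\Omega_{|V(X)|}$, well-definedness on the shared image of $\Omega_n$, and the injectivity case split, which tacitly uses injectivity of $j_1, j_2$) that the paper dismisses as evident.
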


Note that we can also construct a spined functor from the spined category $\HGr$ of hypergraphs to the spined category $\GRPH_{mono}$ of graphs. We do this by observing that the mapping $\mathfrak{G} : \HGr \to \GRPH_{mono}$ which associates every hypergraph to its Gaifman graph (sometimes also referred to as `primal graph') is clearly functorial.

\begin{proposition}\label{prop:Gaifman}
The Gaifman graph functor $\mathfrak{G}: \HGr \to \GRPH_{mono}$ is a spined functor.
\begin{proof}
Note that $\mathfrak{G}[([n], 2^{[n]})] = K_n$ (i.e. $\mathfrak{G}$ satisfies Property \ref{property:SF1}). Now take the proxy pushout $\mathfrak{P}(h_1,h_2)$ of some span \begin{tikzcd}
H_1 & \Omega_n \arrow[l, "h_1"'] \arrow[r, "h_2"] & H_2 \end{tikzcd} in $\HGr$. Recall that $\mathfrak{P}(h_1,h_2)$ is constructed by identifying $H_1$ and $H_2$ along $\Omega_n := ([n], 2^{[n]})$. Thus, since $\mathfrak{G}$ preserves the spine (as we just showed) we know that the Gaifman graph $\mathfrak{G}[\mathfrak{P}(h_1,h_2)]$ of $\mathfrak{P}(h_1,h_2)$ is given by the clique-sum along a $K_n$ of the Gaifman graphs of $H_1$ and $H_2$. In other words we have $\mathfrak{G}[\mathfrak{P}(h_1,h_2)] = \mathfrak{G}[H_1] \#_{\mathfrak{G}[\Omega_n]} \mathfrak{G}[H_1]$ which proves that $\mathfrak{G}$ satisfies Property \ref{property:SF2}. Thus $\mathfrak{G}$ is a spined functor.
\end{proof}
\end{proposition}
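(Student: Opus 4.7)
The plan is to verify the two spinal-functor axioms SF1 and SF2 in turn.

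For SF1 (spine preservation), the task reduces to computing $\mathfrak{G}[\Omega_n^{\HGr}] = \mathfrak{G}[([n], 2^{[n]})]$ and confirming it equals $K_n = \Omega_n^{\GRPH_{mono}}$. By the definition of the Gaifman graph, two vertices $u$ and $v$ are adjacent precisely when $\{u, v\}$ lies in some hyperedge. Since every two-element subset of $[n]$ is itself a hyperedge of $([n], 2^{[n]})$, every pair is adjacent, so the Gaifman graph is $K_n$, as required.

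For SF2 (preservation of proxy pushouts), given a span $H_1 \xleftarrow{h_1} \Omega_n \xrightarrow{h_2} H_2$ in $\HGr$, I must show that $\mathfrak{G}[\mathfrak{P}(h_1, h_2)] = \mathfrak{G}[H_1] \#_{K_n} \mathfrak{G}[H_2]$ and that the induced injection morphisms agree. (The right-hand side is the proxy pushout of the image span in $\GRPH_{mono}$ by Proposition~\ref{prop:proxy-pushouts-in-grph-mono}.) Using the explicit construction of $\mathfrak{P}(h_1, h_2)$ from Theorem~\ref{thm:hypergraphs}, its hyperedges are $E(H_1) \uplus E(H_2)$ quotiented by identification of images under $\Omega_n$. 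The core observation is that a pair $\{u, v\}$ lies in some hyperedge of $\mathfrak{P}(h_1, h_2)$ if and only if it already lies in a hyperedge of either $H_1$ or $H_2$; no new adjacencies in the Gaifman graph arise from the gluing process. This gives the desired equality of graphs, while the maps $\mathfrak{P}(h_1, h_2)_{h_i}$, being vertex-set inclusions, clearly correspond under $\mathfrak{G}$ to the clique-sum inclusions of $\mathfrak{G}[H_i]$ into $\mathfrak{G}[H_1] \#_{K_n} \mathfrak{G}[H_2]$.

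The main obstacle to watch for is that no stray Gaifman edges are introduced by the gluing: a pair $\{u, v\}$ with $u \in V(H_1) \setminus \mathrm{im}(h_1)$ and $v \in V(H_2) \setminus \mathrm{im}(h_2)$ must not lie in any common hyperedge of the pushout. This follows immediately from the explicit formula for $\mathfrak{P}(h_1, h_2)$: its hyperedges are just $E(H_1) \uplus E(H_2)$ modulo identifications along $\Omega_n$, so no single hyperedge can straddle the disjoint ``non-shared'' portions of $H_1$ and $H_2$. With this verified, SF1 and SF2 together establish that $\mathfrak{G}$ is a spinal functor.
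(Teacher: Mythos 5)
Your proposal is correct and follows essentially the same route as the paper's own proof: compute $\mathfrak{G}[([n],2^{[n]})]=K_n$ for \ref{property:SF1}, and for \ref{property:SF2} identify $\mathfrak{G}[\mathfrak{P}(h_1,h_2)]$ with the clique-sum $\mathfrak{G}[H_1]\#_{K_n}\mathfrak{G}[H_2]$ via the explicit construction of the proxy pushout in $\HGr$. Your explicit check that no Gaifman edge can straddle the non-shared parts of $H_1$ and $H_2$ (and that the structure maps correspond) is exactly the detail the paper leaves implicit, so this is a welcome, slightly more careful rendering of the same argument.
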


\begin{corollary}
The spined category $\HGr$ is measurable; in particular there are uncountably many S-functors over $\HGr$.
\begin{proof}
Immediate from Propositions~\ref{prop:S_functions_are_S_functors}~and~\ref{prop:Gaifman}.
\end{proof}
\end{corollary}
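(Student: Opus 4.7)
The plan is to prove a small composition lemma and then derive both assertions from the two cited propositions. First I would verify that the composition of spinal functors is spinal: if $F:\mathcal{C}\to\mathcal{D}$ and $G:\mathcal{D}\to\mathcal{E}$ are spinal, then property \ref{property:SF1} chains as $(GF)\circ\Omega^{\mathcal{C}}=G\circ\Omega^{\mathcal{D}}=\Omega^{\mathcal{E}}$, while property \ref{property:SF2} is preserved by applying it once for $F$ (sending a proxy pushout square in $\mathcal{C}$ to one in $\mathcal{D}$) and then once for $G$ (sending the image square to one in $\mathcal{E}$). In particular, precomposition with the Gaifman functor $\mathfrak{G}:\HGr\to\GRPH_{mono}$, which is spinal by Proposition \ref{prop:Gaifman}, sends S-functors on $\GRPH_{mono}$ to S-functors on $\HGr$.

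For measurability, I would invoke Proposition \ref{prop:S_functions_are_S_functors} on any concrete Halin S-function (for instance $\omega$ or $G\mapsto\tw(G)+1$) to produce an S-functor $F$ on $\GRPH_{mono}$, so that $F\circ\mathfrak{G}$ is the required S-functor on $\HGr$. For the uncountability claim, the assignment $f\mapsto F_f\circ\mathfrak{G}$, where $F_f$ is the S-functor associated to $f$ via Proposition \ref{prop:S_functions_are_S_functors}, is injective: viewing each simple graph $G$ as a $2$-uniform hypergraph $\tilde G$, one has $\mathfrak{G}[\tilde G]=G$ and therefore $(F_f\circ\mathfrak{G})[\tilde G]=f(G)$, so distinct S-functions yield distinct S-functors on $\HGr$.

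The hard part is that the argument above only reduces the problem to exhibiting uncountably many S-functions on graphs, which is not directly stated by Theorem \ref{thm:Halin_tw}. I would address this via the complete distributive lattice structure of Halin's S-functions: it suffices to locate a countably infinite antichain $\{f_n\}_{n\in\mathbb{N}}$ in the lattice (for example, suitable variants of Hadwiger, chromatic-minor, or connectivity-minor numbers associated with different minor-closed classes), and then to observe that distinct subsets $S,S'\subseteq\mathbb{N}$ yield distinct joins $\bigvee_{n\in S}f_n\neq\bigvee_{n\in S'}f_n$ by distributivity, producing $2^{\aleph_0}$ many S-functions on graphs and hence $2^{\aleph_0}$ many S-functors on $\HGr$.
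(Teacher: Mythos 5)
Your main line of argument is exactly the paper's: spinal functors compose (the two-line check of \ref{property:SF1} and \ref{property:SF2} you give is fine), so precomposition with the Gaifman functor $\mathfrak{G}$ of Proposition~\ref{prop:Gaifman} turns any S-functor on $\GRPH_{mono}$ obtained from Proposition~\ref{prop:S_functions_are_S_functors} into an S-functor on $\HGr$, and measurability follows; your observation that $\mathfrak{G}$ hits every simple graph via its $2$-uniform hypergraph, so that $F\mapsto F\circ\mathfrak{G}$ is injective, is a useful point the paper leaves implicit. One small slip: $\omega$ is \emph{not} a Halin S-function (the paper notes right after Proposition~\ref{prop:S_functions_are_S_functors} that the clique number is not minor-isotone), so you cannot feed it into that proposition; use $G\mapsto\tw(G)+1$, which is an S-function by Theorem~\ref{thm:Halin_tw}, or use $\omega$ directly as an S-functor without invoking the proposition.

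The genuine gap is in your uncountability argument. First, the step ``distinct subsets $S,S'$ of a countable antichain yield distinct joins by distributivity'' is not valid: being an antichain only rules out pairwise comparabilities, and even in a completely distributive lattice one can have $f_3\leq f_1\vee f_2$, in which case $\{f_1,f_2\}$ and $\{f_1,f_2,f_3\}$ have the same join. What you actually need is a join-independent family, e.g.\ for each $n$ a graph $G_n$ with $f_n(G_n)>\sup_{m\neq n}f_m(G_n)$, together with the fact that the relevant joins of S-functions are computed pointwise (cf.\ the construction in Theorem~\ref{thm:S_funtor_lattice}); then evaluating at $G_n$ for $n$ in the symmetric difference separates the joins. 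Second, you never actually produce the infinite family: ``suitable variants of Hadwiger, chromatic-minor, or connectivity-minor numbers associated with different minor-closed classes'' is a gesture, not a construction, and it is not verified that these are S-functions forming an infinite (let alone join-independent) family. The paper does not attempt this either: its one-line proof tacitly relies on the known fact, from Halin's work cited for Theorem~\ref{thm:Halin_tw}, that there are $2^{\aleph_0}$ S-functions on graphs, and then transports them along $\mathfrak{G}$ exactly as you do. So your reduction of the corollary to that cardinality fact is correct and matches the paper; the lattice-theoretic argument you offer to establish the fact itself does not work as stated.
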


\noindent Now consider any proxy pushout $\mathfrak{P}(h_1,h_2)$ of a span \begin{tikzcd}
H_1 & \Omega_n \arrow[l, "h_1"'] \arrow[r, "h_2"] & H_2
\end{tikzcd} in $\HGr$. It follows (in much the same way as it does for graphs) that the tree-width of $\mathfrak{P}(h_1,h_2)$ is the maximum of $\tw(H_1)$ and $\tw(H_2)$. Since, by the definition of tree-width, we have $\tw(([n], 2^{[n]})) = n - 1$, it follows that, in $(\HGr, \Phi)$, $\Delta(K) = \tw(K) + 1$ for any chordal object $K$ in $(\HGr, \Phi)$. Thus we shave shown the following result. 

\begin{corollary}
If $\Delta$ is the triangulation number of $(\HGr, \Omega, \mathfrak{P})$, then, for any hypergraph $H$, $\Delta(H) = \tw(H) + 1$.
\end{corollary}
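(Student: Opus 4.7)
The plan is to reduce the claim to two components via Corollary~\ref{corollary:Delta_and_Delta_chord_are_same} (which gives $\Delta = \Delta^{ch}$): first establish $\Delta(K) = \tw(K) + 1$ for every chordal object $K$ of $\HGr$, and then show that every hypergraph admits a chordal completion of width exactly $\tw(H) + 1$ while no chordal completion can have smaller width.

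For the chordal case, I would proceed by induction on the recursive construction of Definition~\ref{def:chordal-object}. The base case $\Omega_n = ([n], 2^{[n]})$ is immediate: $\Delta(\Omega_n) = n$ by spine preservation, and $\tw(\Omega_n) = n - 1$ since the hyperedge $[n] \in 2^{[n]}$ forces some bag to contain all of $[n]$, while the trivial single-bag decomposition attains this bound. For the inductive step with $K = \mathfrak{P}(h_1, h_2)$ and both $H_i$ chordal, Theorem~\ref{thm:triang-functor-is-S-functor} gives $\Delta(K) = \max\{\Delta(H_1), \Delta(H_2)\}$, so it suffices to prove $\tw(\mathfrak{P}(h_1,h_2)) = \max\{\tw(H_1), \tw(H_2)\}$. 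The $\geq$ direction follows because each $H_i$ embeds as a sub-hypergraph of $\mathfrak{P}(h_1, h_2)$. The $\leq$ direction requires gluing optimal tree decompositions $(T_i, (B^i_t)_t)$ of $H_i$ by adding a tree edge between a bag of $T_1$ containing $h_1([n])$ and one of $T_2$ containing $h_2([n])$; such bags exist because $[n]$ is itself a hyperedge of $\Omega_n$. Verifying conditions \textbf{(T1)} and \textbf{(T2)} reduces to the observation that every hyperedge of $\mathfrak{P}(h_1,h_2)$ is inherited from exactly one of the two summands.

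For the extension to an arbitrary hypergraph $H$, I would handle the two inequalities separately. The lower bound is easy: any chordal completion $\delta : H \hookrightarrow K$ realizes $H$ as a sub-hypergraph of $K$ (because $\delta$ is an injective hypergraph homomorphism, hyperedges map to hyperedges of the same cardinality), hence $\tw(H) \leq \tw(K)$, and combining with the chordal case gives $\Delta(K) = \tw(K) + 1 \geq \tw(H) + 1$, so $\Delta^{ch}(H) \geq \tw(H) + 1$. For the upper bound, given an optimal tree decomposition $(T, (B_t)_{t \in V(T)})$ of $H$, I would construct $K$ by iteratively gluing copies of $\Omega_{|B_t|}$ along the shared sub-hypergraphs $\Omega_{|B_s \cap B_t|}$ for adjacent nodes $s, t \in V(T)$. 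An induction on $|V(T)|$ that peels off leaves exhibits $K$ as built from the spine via iterated proxy pushouts, so $K$ is chordal in the sense of Definition~\ref{def:chordal-object}. The canonical map $H \hookrightarrow K$ is then an injective hypergraph homomorphism since each hyperedge of $H$ sits in some bag $B_t$ and hence in the corresponding $\Omega_{|B_t|}$, and the very same tree decomposition $(T, (B_t)_t)$ witnesses $\tw(K) \leq \tw(H)$, yielding $\Delta^{ch}(H) \leq \Delta(K) = \tw(K) + 1 \leq \tw(H) + 1$.

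The main obstacle is the proxy-pushout-preserves-tree-width step of the inductive case: while the analogous clique-sum result for graphs is classical, the hypergraph version demands careful attention to hyperedges of all cardinalities, and in particular to the fact that $\Omega_n$ contains the hyperedge $[n]$ itself, which must lie in a \emph{single} bag of any glued tree decomposition. Once this structural lemma is in hand, the remaining arguments are routine transpositions of the graph-theoretic template to the hypergraph setting, exploiting the tight correspondence between chordal completions in $\HGr$ and tree decompositions that is effectively encoded in the proxy pushout.
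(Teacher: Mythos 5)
Your proposal is correct and follows essentially the same route as the paper: the paper's own argument (the paragraph preceding the corollary) rests on exactly the facts you use, namely $\tw(\Omega_n) = n-1$, that the tree-width of a proxy pushout in $\HGr$ is the maximum of the tree-widths of the two summands, and the identification $\Delta = \Delta^{ch}$ of Corollary~\ref{corollary:Delta_and_Delta_chord_are_same}. The only difference is one of detail: you explicitly carry out the gluing of tree decompositions along a bag containing the image of the hyperedge $[n]$, and the construction of a chordal completion from an optimal tree decomposition of an arbitrary $H$ by iterated proxy pushouts of the $\Omega_{|B_t|}$, steps the paper compresses into ``in much the same way as it does for graphs'' and leaves implicit.
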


\subsection{Complemented tree-width}\label{sec:reflexive homomorphisms}
Another example of a spined category is given by taking the discrete graphs $(\overline{K_n})_{n \in \mathbb{N}}$ as the spine for the category $\Rmono$ of graphs and \emph{reflexive monomorphisms} (which we define in what follows). 

\begin{definition}\label{def:R-homo}
A vertex map $f: V(G) \to V(H)$ is a \emph{reflexive homomorphism} from the graph $G$ to the graph $H$ if the following implication holds for all pairs of vertices $x$ and $y$ in $G$: $f(x)f(y) \in E(H) \Rightarrow xy \in E(G)$.
\end{definition}

In what follows we denote by $\Rhomo$ the category having graphs as objects and reflexive homomorphisms as arrows, while we denote by $\Rmono$ the category of graphs and injective reflexive homomorphisms.

\begin{proposition}\label{prop:R-homo-indep-set}
If $f: \overline{K}_n \to H$ is an arrow in $\Rmono$, then the image of $f$ in $H$ is an independent set.
\end{proposition}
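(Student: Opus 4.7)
The plan is to argue directly from the definition of a reflexive homomorphism (Definition~\ref{def:R-homo}) by contradiction. Suppose, for contradiction, that the image $f(V(\overline{K}_n))$ is not independent in $H$. Then there exist two distinct vertices $u, v \in f(V(\overline{K}_n))$ with $uv \in E(H)$.

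Next I would use the fact that $f$ is a monomorphism, hence injective on vertices, to pull $u$ and $v$ back to a unique pair of vertices $x, y \in V(\overline{K}_n)$ with $f(x) = u$ and $f(y) = v$. Since $u \neq v$ and $f$ is injective, we have $x \neq y$. Now, by the defining implication of a reflexive homomorphism, the fact that $f(x)f(y) = uv \in E(H)$ forces $xy \in E(\overline{K}_n)$.

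The contradiction then follows immediately from the fact that $\overline{K}_n$ is, by definition, the discrete (edge-less) graph on the vertex set $[n]$, so $E(\overline{K}_n) = \emptyset$ and no such edge $xy$ can exist. Hence the image of $f$ must be an independent set in $H$.

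I do not expect a genuine obstacle here: the only subtlety worth flagging is that one should explicitly invoke injectivity of $f$ to ensure $x \neq y$ (so that $xy$ is a legitimate candidate for an edge in a simple loop-free graph), since without distinctness the reflexive-homomorphism implication would be vacuous on the diagonal. Everything else is a one-line unpacking of the definitions.
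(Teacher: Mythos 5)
Your argument is correct and is essentially the paper's own proof (a one-line contradiction via the reflexive-homomorphism implication against $E(\overline{K}_n) = \emptyset$), just spelled out with the additional, harmless remark about injectivity guaranteeing distinct preimages.
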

\begin{proof}
B.w.o.c. if $f(x)f(y) \in E(H)$, then $xy \in E(\overline{K_n})$ even though $E(\overline{K_n}) = \emptyset$.
\end{proof}

Since $\overline{K_n}$ has no edges, every graph on at-most $n$ vertices has an injective reflexive homomorphism to $\overline{K_n}$. Thus $(\overline{K_n})_{n \in \mathbb{N}}$ satisfies Property~\ref{property:SC1} in $\Rmono$ and hence forms a suitable spine. 

Now, we will show that there is an appropriate choice of a proxy-pushout operation -- which we will denote as $\mathfrak{I}$ -- which turns $(\Rmono, (\overline{K_n})_{n \in \mathbb{N}}, \mathfrak{I})$ into a measurable spined category.

A first, but naive candidate for $\mathfrak{I}$ is the operation taking any span of the form \begin{tikzcd}
L & \overline{K_n} \arrow[l, "\ell"'] \arrow[r, "r"] & R
\end{tikzcd} in $\Rmono$ to the graph obtaned by 'gluing' $L$ to $R$ along their shared independent set (c.f. Proposition~\ref{prop:R-homo-indep-set}) of size $n$ (i.e. identify the image of $\overline{K_n}$ in $L$ to the image of $\overline{K_n}$ in $R$). Notice, though, that if we took $L \cong \overline{K}_\ell$ and $R \cong \overline{K}_r$, then this construction would produce as their proxy-pushout the graph $\overline{K}_{\ell + r - n}$. However, this would then preclude the existence of any S-functor $F$ over this spined category since such an $F$ would have to simlutaneously satisfy $F[\overline{K}_{\ell + r - n}] = \ell + r - n$ (because $\overline{K}_{\ell + r - n}$ is in the spine) and also $F[\overline{K}_{\ell + r - n}] = \max \{\ell, r\}$ (because $\max$ is the proxy pushout of $\nat$). Thus, with these considerations in mind, we come to the following definition of $\mathfrak{I}$.

\begin{definition}
Let $\mathfrak{I}$ be the operation taking every span in $\Rmono$ of the form
\begin{tikzcd}
    L & \overline{K}_n \arrow[l, "\ell"'] \arrow[r, "r"] & R
\end{tikzcd} 
 to the cospan 
\begin{tikzcd}
    L \arrow[r, "{\mathfrak{I}(\ell,r)_\ell}"] & {\mathfrak{I}(\ell,r)} & R \arrow[l, "{\mathfrak{I}(\ell,r)_r}"']
\end{tikzcd}
which we define as follows. The graph $\mathfrak{I}(\ell,r)$ is given by identifying $L$ and $R$ along their shared $n$-vertex independent set and then adding edges to this resulting graph so as to make $L\setminus \ell(\overline{K_n})$ complete to $R\setminus \ell(\overline{K_n})$; in other words $\mathfrak{I}(\ell,r)$ is the graph with vertex-set $(V(L)\uplus V(R))/_{\ell = r}$ and edge-set
\[E(L) \uplus E(R) \uplus \Bigl(  \bigl(V(L)\setminus \ell(\overline{K_n})\bigr) \times \bigl(V(R)\setminus r(\overline{K_n})\bigr)\Bigr).\]
Finally, the arrows $\mathfrak{I}(\ell,r)_\ell$ and $\mathfrak{I}(\ell,r)_r$ are just the obvious injections taking $L$ and $R$ respectively into $\mathfrak{I}(\ell,r)$ (these are easily seen to be reflexive homomorphisms).
\end{definition}

\begin{proposition}\label{prop:IS-category-spined}
The triple $(\Rmono, (\overline{K_n})_{n \in \mathbb{N}}, \mathfrak{I})$ is a spined category. 
\end{proposition}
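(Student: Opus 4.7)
My plan is to verify Properties~\ref{property:SC1} and~\ref{property:SC2} in turn, after first confirming that $\mathfrak{I}$ actually produces cocones in $\Rmono$. For the cocone check, injectivity of $\mathfrak{I}(\ell, r)_\ell$ and $\mathfrak{I}(\ell, r)_r$ is immediate from the quotient construction, the commutativity $\mathfrak{I}(\ell, r)_\ell \circ \ell = \mathfrak{I}(\ell, r)_r \circ r$ holds because both composites send $v \in V(\overline{K_n})$ to the common equivalence class $\{\ell(v), r(v)\}$, and the inclusions are reflexive homomorphisms because no added edge connects two vertices on the same side, so every edge of $\mathfrak{I}(\ell, r)$ incident to two vertices of $L$ (respectively $R$) already lives in $E(L)$ (resp.\ $E(R)$). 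Property~\ref{property:SC1} follows from the observation that $\overline{K_n}$ has no edges, so any injection from a graph on at most $n$ vertices into $\overline{K_n}$ is vacuously a reflexive homomorphism.

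For Property~\ref{property:SC2}, I would define the candidate map $(g', h'): \mathfrak{I}(g, h) \to \mathfrak{I}(g' \circ g, h' \circ h)$ piecewise: a vertex of $\mathfrak{I}(g, h)$ with a representative in $V(G)$ is sent to $g'$ applied to that representative, and one with a representative in $V(H) \setminus h(\overline{K_n})$ is sent via $h'$. Well-definedness on equivalence classes follows because the identifications in the domain ($g(v) \sim h(v)$) are precisely sent to the identifications in the codomain ($g'(g(v)) \sim h'(h(v))$). Uniqueness is forced: any morphism making the diagram commute must agree with $g'$ on the $G$-part and with $h'$ on the $H$-part.

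The main obstacle is verifying that $(g', h')$ is actually a reflexive homomorphism, which requires case analysis on the three families of edges of $\mathfrak{I}(g' \circ g, h' \circ h)$, namely edges from $E(G')$, edges from $E(H')$, and added bipartite edges between $V(G') \setminus (g' \circ g)(\overline{K_n})$ and $V(H') \setminus (h' \circ h)(\overline{K_n})$. If $(g',h')(x)(g',h')(y) \in E(G')$, I first argue that both $x$ and $y$ must have $G$-representatives: a vertex $(g',h')(x) \in V(G')$ can come from a class in $V(H) \setminus h(\overline{K_n})$ only if $h'(x) \in V(G') \cap V(H')$, which via the pushout identification forces $x \in h(\overline{K_n})$ by injectivity of $h'$, a contradiction. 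Once both representatives sit in $V(G)$, the reflexive homomorphism property of $g'$ yields the corresponding edge in $E(G) \subseteq E(\mathfrak{I}(g,h))$; the $E(H')$ case is symmetric. For added edges, a similar argument using injectivity of $g'$ and $h'$ forces the representatives of $x$ and $y$ to lie strictly in $V(G) \setminus g(\overline{K_n})$ and $V(H) \setminus h(\overline{K_n})$, respectively, which produces an added edge of $\mathfrak{I}(g,h)$. Injectivity of $(g', h')$ follows from an analogous case split, using Proposition~\ref{prop:R-homo-indep-set} and injectivity of $g'$ and $h'$ to exclude collisions between $G$-representatives and $H$-representatives outside the identified part.
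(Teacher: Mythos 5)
Your proof is correct and follows essentially the same route as the paper: Property~\ref{property:SC1} via the observation that $\overline{K_n}$ is edgeless, and Property~\ref{property:SC2} via the piecewise-defined map (unique because it must agree with $g'$ and $h'$ on the two parts), followed by checking it is an injective reflexive homomorphism. The only difference is that you spell out the case analysis on the three edge families of $\mathfrak{I}(g'\circ g, h'\circ h)$, which the paper dismisses as immediate from the definition of $\mathfrak{I}$; your added detail is accurate.
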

\begin{proof}
As we already observed, Property~\ref{property:SC1} is satisfied. For Property~\ref{property:SC2}, we are given any diagram
% https://tikzcd.yichuanshen.de/#N4Igdg9gJgpgziAXAbVABwnAlgFyxMJZABgBpiBdUkANwEMAbAVxiRABkByEAX1PUy58hFAEZyVWoxZt2vfiAzY8BIgCYJ1es1aIQAHX0QaMAE4MsYGMADSAfTA95A5cKIBmTVJ1sASs8VBFRFkABYvbRk9X24eSRgoAHN4IlAAM1MIAFskcRAcCCQybyiDfRgGBm5qBjoAIwqABSC3PVMsRIALHACM7KQNfMLEPNqGhmbXVTaO7pAtaV0yioZezJzEQYKkTxKl0zX+xF3txHC9tlNYih4gA
\begin{tikzcd}
L' & L \arrow[l, "\ell'"'] & \overline{K_n} \arrow[l, "\ell"'] \arrow[r, "r"] & R \arrow[r, "r'"] & R'
\end{tikzcd}
in $\Rmono$, and we need to demonstrate the existence of a unique arrow $m: \mathfrak{I}(\ell, r) \to \mathfrak{I}(\ell' \ell, r' r)$ which makes the following diagram commute. 
\begin{center}
% https://tikzcd.yichuanshen.de/#N4Igdg9gJgpgziAXAbVABwnAlgFyxMJZABgBoAmAXVJADcBDAGwFcYkQAZAchAF9T0mXPkIoyARmp0mrdhz4CQGbHgJEyxKQxZtEIADr6ItGACdGWMDGABpAPpheCwSpFFxpTTW2y9AJWclIVVRZHJPLRldED8efhdhNRQPSW8o9kMAW3ocAAsAM1N6AGtgAEleAApDGEZGUgACUwBKQOVE0PCqNJ0M-Wy8wpLyqpq6rgax+qaJlr4pGCgAc3giUEKITKQPEBwIJDJpXr0pnhpGegAjWoAFYLc9Uywl3JxAja3EcN39xB2L66MO6uJIgJ4vN49XwGfS1RjvUybJDfPZIADMUOipgRSMQGJ+SAALJj2KY4ooPuiaKjEABWEknfo5ApFUoVaqwuqNFp2bHxECUv7U370o7QrLMoZs0ac6Y8qYgc5XW73UHg144z6HGkANgZMIGLOG7NOk1l3NmzTswFOUyc-MFxIJiD1YuiEsGrJGHLhEymFqaVuAZNM9opiM+ot150s0SgEBwOEWird7C2vEovCAA
\begin{tikzcd}
\overline{K_n} \arrow[d, "\ell"'] \arrow[r, "r"]                 & R \arrow[r, "r'"] \arrow[d, "{\mathfrak{I}(\ell, r)_r}"] & R' \arrow[dd, "{\mathfrak{I}(\ell' \ell, r' r)_{r'r}}"] \\
L \arrow[d, "\ell'"'] \arrow[r, "{\mathfrak{I}(\ell, r)_\ell}"'] & {\mathfrak{I}(\ell, r)} \arrow[rd, "m", dotted]          &                                                         \\
L' \arrow[rr, "{\mathfrak{I}(\ell' \ell, r' r)_{\ell'\ell}}"]    &                                                          & {\mathfrak{I}(\ell' \ell, r' r)}                       
\end{tikzcd}
\end{center}
We use $\ell'$ and $r'$ to define $m$ piece-wise as follows: 
\[
m: x \mapsto 
\begin{cases}
\bigl(\mathfrak{I}(\ell' \ell, r' r)_{r'r} \circ r'\bigr) (x) \text{ if } x \in V(R) \cap V(\mathfrak{I}(\ell, r)) \\
\bigl(\mathfrak{I}(\ell' \ell, r' r)_{\ell' \ell} \circ \ell'\bigr) (x) \text{ otherwise}.
\end{cases}
\]
Clearly $m$ is the unique injective \emph{vertex-map} that makes the above diagram commute; thus all that remains to be shown is that $m$ is indeed a reflexive homomorphism. However, this follows immediately from the definition of $\mathfrak{I}$ and from the fact that $r'$ and $\ell'$ are $R$-homomorphisms.
\end{proof}

\begin{proposition}\label{prop:complement-iso-of-cats}
The complementation map $\overline{(-)}: \Rmono \to \GRPH_{mono}$ is a functor and indeed it is an isomorphism of categories.
\end{proposition}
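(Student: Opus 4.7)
The plan is to exhibit the complementation map as a strict involution on the underlying sets of objects and arrows, and then show that the reflexive-homomorphism condition on $\Rmono$ is exactly the graph-homomorphism condition on complements.

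First I would define the action on morphisms: given $f : G \to H$ in $\Rmono$, let $\overline{f}$ be the same set-theoretic vertex map, viewed as going from $\overline{G}$ to $\overline{H}$. The key verification is that $\overline{f}$ is actually a morphism in $\GRPH_{mono}$. So suppose $xy \in E(\overline{G})$, i.e.\ $x \neq y$ and $xy \notin E(G)$. Since $f$ is injective, $f(x) \neq f(y)$. The contrapositive of Definition~\ref{def:R-homo} reads $xy \notin E(G) \Rightarrow f(x)f(y) \notin E(H)$, so $f(x)f(y) \notin E(H)$ and hence $\overline{f}(x)\overline{f}(y) \in E(\overline{H})$, as needed. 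Functoriality (preservation of identities and composition) is immediate because the assignment on morphisms leaves the underlying vertex map unchanged.

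Next I would construct the candidate inverse, namely the \emph{same} complementation map $\overline{(-)} : \GRPH_{mono} \to \Rmono$. By a symmetric argument one checks that an injective graph homomorphism $f : G \to H$ yields an injective reflexive homomorphism $\overline{G} \to \overline{H}$: given $f(x)f(y) \in E(\overline{H})$ with $f(x) \neq f(y)$, the fact that $f$ is a graph homomorphism gives $xy \notin E(G)$, and injectivity gives $x \neq y$, so $xy \in E(\overline{G})$, which is precisely the reflexive-homomorphism condition.

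Finally I would observe that $\overline{\overline{G}} = G$ on the nose for every finite simple graph $G$, and the action on morphisms is literally the identity on underlying vertex maps, so the two composites of the complementation functors are equal to the identity functors on the respective categories. This establishes a strict (not merely equivalence) isomorphism of categories. There is no serious obstacle here; the only subtlety to flag is the interplay between the contrapositive of the reflexive condition and injectivity, which together encode precisely ``graph homomorphism on the complement''.
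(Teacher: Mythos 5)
Your proposal is correct and follows essentially the same route as the paper's proof: both verify that the same underlying vertex map turns reflexive (injective) homomorphisms into injective graph homomorphisms of the complements and conversely, and both conclude via the fact that complementation is strictly self-inverse on objects and arrows. Your explicit attention to injectivity guaranteeing $f(x)\neq f(y)$ (so that the complement edge genuinely exists) is a welcome tightening of a point the paper glosses over, but it is not a different argument.
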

\begin{proof}
First notice that $\overline{(-)}$ preserves identity arrows and it is a bijection on objects; so now consider an arrow $a: A \to B$ in $\Rmono$ we claim that the vertex map specified by $a$ constitutes an arrow from $\overline{A}$ to $\overline{B}$ in $\GRPH_{mono}$. To see this, take any edge $xy \in E(\overline{A})$; since $x$ and $y$ are not adjacent in $A$, then $a(x)a(y) \not \in E(B)$ (since $a$ is a reflexive homomorphism) and thus $a(x)a(y) \in E(\overline{B})$.

Conversely, if  $a: \overline{A} \to \overline{B}$ is an arrow in $\GRPH_{mono}$, for each pair $a(x)a(y) \in E(B)$ we must have $xy \not \in E(\overline{A})$ (since $a$ is a graph monomorphism and thus $A$ is a subgraph of $B$) which is equivalent to saying that $a(x)a(y) \in E(B)$ implies $xy \in E(A)$, as desired. Thus we have that $\overline{(-)}$ is a functor which is bijective on objects, bijective on arrows, full and faithful. Furthermore, it is easily seen that complementation is self-inverse, so it is the desired isomorphism of categories. 
\end{proof}

\begin{corollary}
The isomorphism $\overline{(-)}: \Rmono \to \GRPH_{mono}$ is a spined functor \[\overline{(-)}: (\Rmono, (\overline{K_n})_{n \in \mathbb{N}}, \mathfrak{I}) \to (\GRPH_{mono}, (K_n)_{n \in \mathbb{N}}, \#).\]
\end{corollary}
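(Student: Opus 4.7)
The plan is to verify the two properties SF1 and SF2 defining a spinal functor directly. Since Proposition \ref{prop:complement-iso-of-cats} already establishes that $\overline{(-)}$ is a functor, we need only check that it preserves the spine and that it carries proxy pushouts of the form $\mathfrak{I}$ to clique sums.

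Property \ref{property:SF1} is immediate: complementation is self-inverse on objects, so $\overline{\overline{K_n}} = K_n$ for every $n \in \mathbb{N}$, which is precisely what spine-preservation requires.

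The substance of the proof lies in \ref{property:SF2}. Given a span \begin{tikzcd} L & \overline{K_n} \arrow[l, "\ell"'] \arrow[r, "r"] & R\end{tikzcd} in $\Rmono$, I would show $\overline{\mathfrak{I}(\ell,r)} = \overline{L} \#_{K_n} \overline{R}$ by comparing edge-sets on the common vertex set $\bigl(V(L) \uplus V(R)\bigr)/_{\ell=r}$. By Proposition \ref{prop:R-homo-indep-set} the images $\ell(\overline{K_n})$ and $r(\overline{K_n})$ are independent sets in $L$ and $R$ respectively, so after complementation they become cliques; this is what ensures the right-hand side is actually a well-defined clique sum along $K_n$. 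Then I would do a short case analysis on pairs of vertices $\{u,v\}$: (i) if both lie in $V(L)\setminus\ell(\overline{K_n})$ (or symmetrically in $R$), then $uv$ is an edge of $\mathfrak{I}(\ell,r)$ iff $uv \in E(L)$, so it is a non-edge iff $uv \in E(\overline{L})$, matching the clique-sum; (ii) if both lie in the identified $\overline{K_n}$, they are non-adjacent in $\mathfrak{I}(\ell,r)$ (as shown in the previous sentence) and thus form an edge both in $\overline{\mathfrak{I}(\ell,r)}$ and in the $K_n$ of the clique sum; (iii) if $u$ is purely on the $L$-side and $v$ purely on the $R$-side, the extra edges added by $\mathfrak{I}$ make $uv$ an edge of $\mathfrak{I}(\ell,r)$, hence a non-edge of its complement, which matches the fact that clique sums introduce no cross-edges; (iv) the mixed case where one vertex is in $\ell(\overline{K_n})$ and the other purely in $L\setminus\ell(\overline{K_n})$ is handled exactly as in case (i). In each case the edge relations of $\overline{\mathfrak{I}(\ell,r)}$ and of $\overline{L}\#_{K_n}\overline{R}$ agree.

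Finally, I would note that the two inclusions $\mathfrak{I}(\ell,r)_\ell$ and $\mathfrak{I}(\ell,r)_r$ are the obvious vertex-injections from $L$ and $R$ into the shared vertex set, and under the identity-on-vertices action of $\overline{(-)}$ they coincide with the analogous inclusions $\overline{L} \hookrightarrow \overline{L}\#_{K_n}\overline{R}$ and $\overline{R} \hookrightarrow \overline{L}\#_{K_n}\overline{R}$; this verifies the arrow-equalities of \ref{property:SF2}. The only delicate point in the whole argument is case (ii), which relies crucially on Proposition \ref{prop:R-homo-indep-set} to turn the shared independent set on one side into the shared clique on the other; everything else is a routine bookkeeping exercise on edge sets.
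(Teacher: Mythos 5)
Your proof is correct and follows essentially the same route as the paper: the paper likewise invokes Proposition~\ref{prop:complement-iso-of-cats} for functoriality, notes spine preservation, and asserts that complementation sends each $\mathfrak{I}$-square to the corresponding clique-sum square, which is exactly the identity $\overline{\mathfrak{I}(\ell,r)} = \overline{L}\#_{K_n}\overline{R}$ you verify. Your explicit case analysis on edge sets (and the appeal to Proposition~\ref{prop:R-homo-indep-set} for the shared independent set becoming the shared clique) simply fills in the details the paper leaves as ``clearly''.
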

\begin{proof}
By Proposition~\ref{prop:complement-iso-of-cats}, $\overline{(-)}$ is a functor and indeed it is an isomorphism of categories. It clearly preserves the spine and it also takes any proxy-pushout square\[
% https://tikzcd.yichuanshen.de/#N4Igdg9gJgpgziAXAbVABwnAlgFyxMJZABgBoBGAXVJADcBDAGwFcYkQAZEAX1PU1z5CKcqWLU6TVuwBKPPiAzY8BImXE0GLNohAAdPRFowAToyxgYwANIB9MN3n9lQoqKqapO-XoC29HAALADMTegBrYABJbgAKAxhGRlIAAhMASnSeCRgoAHN4IlBQiF8kACYaHAgkMhBGegAjRIAFARVhEBMsPMCcEE9tdgSkpxASssRKkGqkUUkh3RMxibmqmsQAZl5ik1La9aRtym4gA
\begin{tikzcd}
\overline{K_n} \arrow[d, "\ell"'] \arrow[r, "r"] & R \arrow[d]              \\
L \arrow[r]                                      & {\mathfrak{I}(\ell, r))}
\end{tikzcd} \text{ to the proxy-pushout square }
% https://tikzcd.yichuanshen.de/#N4Igdg9gJgpgziAXAbVABwnAlgFyxMJZABgBpiBdUkANwEMAbAVxiRAGkB9QgX1PUy58hFAEZyVWoxZsAOrIg0YAJwZYwMYACUeIPgOx4CRMqMn1mrRCHmKVajcAAyu-SAyHhRcWeoWZ1rZKquqaLgAE8gDEnMBcYDyRCsEOmjp6kjBQAObwRKAAZsoQALZIZCA4EEjiUpZyyfahwMqu-CBFpeXUVUgATH7SVjaNIY7yMAwMbYXFZYi1vYgAzG6d8wOV1Ss8FDxAA
\begin{tikzcd}
K_n \arrow[r, "\overline{r}"] \arrow[d, "\overline{\ell}"] & \overline{R} \arrow[d]             \\
\overline{L} \arrow[r]                                     & \overline{L} \#_{K_n} \overline{R},
\end{tikzcd}. \] Thus $\overline{(-)}$ is a spined functor.
\end{proof}

Since spined functors compose and since $(\GRPH_{mono}, (K_n)_{n \in \mathbb{N}}, \#)$ is measurable, we also immediately have the following result. 

\begin{corollary}
The spined category $(\Rmono, (\overline{K_n})_{n \in \mathbb{N}}, \mathfrak{I})$ is measurable.
\end{corollary}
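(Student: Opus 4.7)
The plan is to use the previously established spined functor $\overline{(-)}: (\Rmono, (\overline{K_n})_{n \in \mathbb{N}}, \mathfrak{I}) \to (\GRPH_{mono}, (K_n)_{n \in \mathbb{N}}, \#)$ together with measurability of $\GRPH_{mono}$ (which follows from Proposition~\ref{prop:S_functions_are_S_functors} applied to, say, the clique number, or equivalently from Corollary~\ref{corollary:spinal-tree-width-homo} applied to the triangulation functor itself). The strategy is to produce an explicit S-functor on $\Rmono$ by composition.

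First I would fix any S-functor $F: \GRPH_{mono} \to \nat$, whose existence is guaranteed by the measurability of $\GRPH_{mono}$. Next I would form the composite functor $F \circ \overline{(-)}: \Rmono \to \nat$. The key step is then to verify that composition of spined functors yields a spined functor, after which the desired conclusion is immediate. This reduces to two checks: preservation of the spine, $(F \circ \overline{(-)}) \circ \Omega^{\Rmono} = F \circ (\overline{(-)} \circ \Omega^{\Rmono}) = F \circ \Omega^{\GRPH_{mono}} = \Omega^{\nat}$, and preservation of proxy pushouts, which follows by applying Property~\ref{property:SF2} for $\overline{(-)}$ first and then for $F$ to any span of the form \begin{tikzcd}L & \overline{K_n} \arrow[l, "\ell"'] \arrow[r, "r"] & R\end{tikzcd} in $\Rmono$.

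Both checks are entirely formal: they follow mechanically from the two defining properties \ref{property:SF1} and \ref{property:SF2} of spinal functors applied in sequence. There is no genuine obstacle here, since the hard work has already been done in establishing that $\overline{(-)}$ is a spined functor and that $\GRPH_{mono}$ is measurable. The only ``gotcha'' worth flagging explicitly is that one should state (perhaps as a small general lemma) that spined functors are closed under composition, because this fact is used without proof in the paragraph preceding the corollary; verifying it amounts to the two equational manipulations above. Thus the composite $F \circ \overline{(-)}$ is an S-functor on $(\Rmono, (\overline{K_n})_{n \in \mathbb{N}}, \mathfrak{I})$, which is therefore measurable, as claimed.
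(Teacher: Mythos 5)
Your proposal is correct and takes essentially the same route as the paper: the paper's justification is precisely the remark that spined functors compose and that $(\GRPH_{mono}, (K_n)_{n \in \mathbb{N}}, \#)$ is measurable, so composing an S-functor on $\GRPH_{mono}$ with the complementation spined functor $\overline{(-)}$ yields an S-functor on $(\Rmono, (\overline{K_n})_{n \in \mathbb{N}}, \mathfrak{I})$. Your explicit verification that spinal functors are closed under composition merely fills in a detail the paper leaves implicit.
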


An example of an S-functor over $(\Rmono, (\overline{K_n})_{n \in \mathbb{N}}, \mathfrak{I})$ is its generalized clique number $\dot{\omega}_{R}$ (i.e. the function associating to each graph its independence number). This can be easily seen either by factoring it through the complementation map as $\dot{\omega}_{R} = \omega \circ \overline{(-)}$ or by simply checking from first principles that $\dot{\omega}_{R}$ satisfies Properties~\ref{property:SF1} and~\ref{property:SF2}. 

%\begin{corollary}
%A vertex-map $\delta: V(X) \to V(H)$ constitutes a width-$k$ chordal completion $\delta: X \to H$ in $(\Rmono, (\overline{K_n})_{n \in \mathbb{N}}, \mathfrak{I})$ if and only if $\delta: \overline{X} \to \overline{H}$ is a chordal completion in $(\GRPH_{mono}, (K_n)_{n \in \mathbb{N}}, \#)$.
%\end{corollary}
\begin{corollary}
Letting $\Delta$ be the triangulation functor of $(\Rmono, (\overline{K_n})_{n \in \mathbb{N}}, \mathfrak{I})$, we have $\Delta[G] = \tw(\: \overline{G}\:) + 1$ for all graphs $G$.
\end{corollary}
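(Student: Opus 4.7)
The plan is to leverage the fact, established in the preceding corollary, that complementation $\overline{(-)}\colon (\Rmono, (\overline{K_n})_n, \mathfrak{I}) \to (\GRPH_{mono}, (K_n)_n, \#)$ is an \emph{isomorphism} of spined categories, and then to transport the already-known identity $\Delta_{\GRPH}[H] = \tw(H) + 1$ (Corollary~\ref{corollary:spinal-tree-width-homo}) along this isomorphism. In outline: I will show that any spined isomorphism $\Phi\colon \mathcal{C} \to \mathcal{D}$ between measurable spined categories induces a bijection between S-functors on $\mathcal{C}$ and S-functors on $\mathcal{D}$ via $F \mapsto F \circ \Phi^{-1}$, and consequently carries the triangulation functor of $\mathcal{D}$ to that of $\mathcal{C}$.

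First I would unwind the definitions: an S-functor $F\colon \GRPH_{mono} \to \nat$ pulls back along $\overline{(-)}$ to an S-functor $F \circ \overline{(-)}\colon \Rmono \to \nat$ (S-functors compose, and $\overline{(-)}$ is spinal). Conversely, since $\overline{(-)}$ is invertible as a spined functor, every S-functor on $\Rmono$ arises in this way. This immediately gives us that $G$ is pseudo-chordal in $\Rmono$ if and only if $\overline{G}$ is pseudo-chordal in $\GRPH_{mono}$, because the pseudo-chordality condition is precisely agreement of all S-functors on the object.

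Next, I would observe that the isomorphism induces a bijection between pseudo-chordal completions: a monic arrow $\delta\colon G \hookrightarrow H$ in $\Rmono$ is a pseudo-chordal completion of $G$ iff $\overline{\delta}\colon \overline{G} \hookrightarrow \overline{H}$ is a pseudo-chordal completion of $\overline{G}$ in $\GRPH_{mono}$ (using the previous step together with functoriality and the fact that $\overline{(-)}$ is full, faithful and bijective on objects). Moreover, for any fixed choice of S-functor $F$ on $\GRPH_{mono}$ used to measure width, the width of $\delta$ computed using $F \circ \overline{(-)}$ equals the width of $\overline{\delta}$ computed using $F$, since by definition both equal $F[\overline{H}]$. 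Taking the minimum over all pseudo-chordal completions on both sides yields
\[
\Delta_{\Rmono}[G] = \Delta_{\GRPH_{mono}}[\overline{G}] = \tw(\overline{G}) + 1,
\]
where the last equality is Corollary~\ref{corollary:spinal-tree-width-homo} applied to $\overline{G}$.

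The main obstacle is really just bookkeeping: verifying that pseudo-chordality transports along the isomorphism (rather than only chordality, which transports by a trivial induction on Definition~\ref{def:chordal-object} using preservation of spine and proxy pushouts). The cleanest route is to prove the general lemma that spined isomorphisms preserve pseudo-chordality and the value of the triangulation functor, and then to apply it; nothing subtle happens beyond that, because the categorical structure has been explicitly matched up by the preceding corollary.
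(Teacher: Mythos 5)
Your argument is correct and is essentially the route the paper intends: the corollary is stated without an explicit proof precisely because the preceding results already set up the spined isomorphism $\overline{(-)}\colon (\Rmono, (\overline{K_n})_{n\in\mathbb{N}}, \mathfrak{I}) \to (\GRPH_{mono}, (K_n)_{n\in\mathbb{N}}, \#)$ and the measurability of the source, so the identity follows by transporting S-functors, pseudo-chordal completions and their widths along that isomorphism and then invoking Corollary~\ref{corollary:spinal-tree-width-homo} for $\overline{G}$. Your bookkeeping (the inverse of a spined isomorphism is again spinal, hence pseudo-chordality and the value of $\Delta$ transport) is exactly the justification the paper leaves implicit.
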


\section{New Spined Categories from Old}\label{sec:new_from_old}
The spined categories encountered so far came equipped with their ``standard'' notion of (mono)morphism: posets with monotone maps, graphs with graph homomorphsims, hypergraphs with hypergraph homomorphisms or graphs with reflexive homomorphisms. In particular, for a class $S$ of combinatorial objects decorated with extraneous structure (such as colored or labeled graphs), the appropriate choice of morphism may be less obvious. In these cases, a ``forgetful'' function $f: S \rightarrow \mathcal{C}$ from $S$ to some spined category $\mathcal{C}$ allows us to study properties of $S$ by studying properties of its image in $\mathcal{C}$.

It is straightforward to check that we can define a category $S_{\downarrow f}$, which we call the \emph{$S$-category induced by $f$} by taking $S$ itself as the collection of objects of $S_{\downarrow f}$ and, for any two objects $A$ and $B$ in $S$, setting $\homset_{S_{\downarrow f}}(A, B) := \homset_{\mathcal{C}}(f(A), f(B))$. 

It will be convenient to notice that -- up to categorial isomorphism -- $f^{-1}(X)$ (for any object $X$ in the range of $f$) consists of only one object in $S_{\downarrow f}$. To see this, suppose $f$ is not injective (otherwise there is nothing to show) and let $A, B \in S$ be elements of the set $f^{-1}(X)$. By the definition of $S_{\downarrow f}$, we know that $\id_{X} \in \homset_{S_{\downarrow f}}(A,B)$ since $\homset_{S_{\downarrow f}}(A,B) = \homset_{\mathcal{C}}(A,B)$. Thus $A$ and $B$ are isomorphic in $S_{\downarrow f}$ since identity arrows are always isomorphisms.

Note that by the construction of $S_{\downarrow f}$, the function $f$ actually constitutes a faithful and injective (on objects and arrows) functor from $S_{\downarrow f}$ to $\mathcal{C}$. The next result shows that if $\mathcal{C}$ is spined and if the range of $f$ is sufficiently large, then we can chose a spine $\Omega^S$ and a proxy pushout $\mathfrak{P}^S$ on $S_{\downarrow f}$ which turn $(S_{\downarrow f}, \Omega^S, \mathfrak{P}^S)$ into a spined category and $f: (S_{\downarrow f}, \Omega^S, \mathfrak{P}^S) \rightarrow (\mathcal{C}, \Omega, \mathfrak{P})$ into a spined functor.

\begin{theorem}\label{thm:new_spined_categories_from_old}
Let $(\mathcal{C}, \Omega, \mathfrak{P})$ be a spined category, $S$ be a set and $f: S \to \mathcal{C}$ be a function. If $f$ is both
\begin{enumerate}
    \item surjective on the spine of $\mathcal{C}$ (i.e. $\forall n \in \mathbb{N}, \exists X \in S$ s.t. $f(X) = \Omega_n$) and such that
    \item for every span \begin{tikzcd}
f(X) & \Omega_n \arrow[l, "x"'] \arrow[r, "y"] & f(Y)
\end{tikzcd} in $\mathcal{C}$, there exists a distinguished element $Z_{x,y} \in S$ such that $f(Z_{x,y}) = \mathfrak{P}(x,y)$,
\end{enumerate}
then we can choose a functor $\Omega^S$ and operation $\mathfrak{P}^S$ such that
\begin{itemize}
    \item $(S_{\downarrow f}, \Omega^S, \mathfrak{P}^S)$ is a spined category and 
    \item $f$ is a spinal functor from $(S_{\downarrow f}, \Omega^S, \mathfrak{P}^S)$ to $(\mathcal{C}, \Omega, \mathfrak{P})$ 
    \item if $(\mathcal{C}, \Omega, \mathfrak{P})$ is a measurable spined category, then so is $(S_{\downarrow f}, \Omega^S, \mathfrak{P}^S)$.
\end{itemize} 
\begin{proof}
Define $\Omega^S$ and $\mathfrak{P}^S$ as follows: \begin{itemize}
    \item $\Omega^S: \mathbb{N}_{=} \to S_{\downarrow f}$ is the functor taking each $n$ to an element of $f^{-1}(\Omega_n)$ (we can think of this as picking a representative of the equivalence class $f^{-1}(\Omega_n)$ for each $n$ since, as we observed earlier, all elements of $f^{-1}(\Omega_n)$ are isomorphic),
    \item $\mathfrak{P}^S$ is the operation assigning to each span \begin{tikzcd}
X & \Omega_n \arrow[l, "x"'] \arrow[r, "y"] & Y
\end{tikzcd} in $S_{\downarrow f}$ the cocone \begin{tikzcd}
X \arrow[r, "{\mathfrak{P}(x,y)_x}"] & {\mathfrak{P}^S(x,y) := Z_{x,y}} & Y \arrow[l, "{\mathfrak{P}(g,h)_h}"']
\end{tikzcd}, where $Z_{x,y}$ is the distinguished element whose existence is guaranteed by the second property of $f$.
\end{itemize}

Now we will show that $(S_{\downarrow f}, \Omega^S, \mathfrak{P}^S)$ is a spined category. Property \ref{property:SC1} holds in $(S_{\downarrow f}, \Omega^S, \mathfrak{P}^S)$ since it holds in $(\mathcal{C}, \Omega, \mathfrak{P})$ and since, for all $A,B \in S$, we have  $\homset_{S_{\downarrow f}}(A,B) := \homset_{\mathcal{C}}(A,B)$. To show Property \ref{property:SC2}, we must argue that that, for every diagram of the form
\begin{equation}\label{diagram:new_from_old_1}
\begin{tikzcd}
{Q \in f^{-1}(\Omega_n)} \arrow[r, "h_1"] \arrow[d, "h_2"] & H_1 \arrow[d] \arrow[r, "j_1"]                                                                                              & J_1 \arrow[dd]                                                       \\
H_2 \arrow[d, "j_2"] \arrow[r]                             & {\mathfrak{P}^S(h_1,h_2)} \arrow[rd, "{(j_1,j_2)}", dotted]                                                                 &                                                                      \\
J_2 \arrow[rr]                                             &                                                                                                                             & {\mathfrak{P}^S(j_1 h_1,j_2 h_2)}                                    \\
\end{tikzcd}
\end{equation}
in $S_{\downarrow f}$ there is an arrow $p$ (which is dotted in Diagram (\ref{diagram:new_from_old_1})) which makes the diagram commute.

By the second condition on $f$, we know that $f(\mathfrak{P}^S(j_1h_1,j_2h_2)) = \mathfrak{P}(fj_1h_1,fj_2h_2)$ and  $f(\mathfrak{P}^S(j_1h_1,j_2h_2)) = \mathfrak{P}(fj_1h_1,fj_2h_2)$. Thus we have that $f$ maps Diagram (\ref{diagram:new_from_old_1}) in $S_{\downarrow f}$ to the following diagram in $\mathcal{C}$.
\begin{equation}\label{diagram:new_from_old_2}
\begin{tikzcd}
\Omega_n \arrow[r, "h_1"] \arrow[d, "h_2"]                 & f(H_1) \arrow[d] \arrow[r, "j_1"]                                                                                           & f(J_1) \arrow[dd]                                                    \\
f(H_2) \arrow[d, "j_2"] \arrow[r]                          & {{f(\mathfrak{P}^S(h_1,h_2)) = \mathfrak{P}(fh_1, fh_2)}} \arrow[rd, "{f \circ (j_1,j_2) = (j_1,j_2)}" description, dashed] &                                                                      \\
f(J_2) \arrow[rr]                                          &                                                                                                                             & {{f(\mathfrak{P}^S(j_1h_1,j_2h_2)) = \mathfrak{P}(fj_1h_1,fj_2h_2)}}
\end{tikzcd}
\end{equation}
Since $(\mathcal{C}, \Omega, \mathfrak{P})$ satisfies Property \ref{property:SC2}, the dashed arrow $(j_1,j_2)$ in Diagram (\ref{diagram:new_from_old_2}) exists, is unique and makes the diagram commute. But since we have $\homset_{S_{\downarrow f}}(A,B) := \homset_{\mathcal{C}}(A,B)$ for all $A,B \in S$, we know that $p = (j_1,j_2)$, as desired. 

Now we will argue that $f$ is a spinal functor. By the first property of $f$, we know that $f$ preserves the spine. By the second property of $f$ and by what we just argued about Diagrams (\ref{diagram:new_from_old_1}) and (\ref{diagram:new_from_old_2}), we know that $f$ satisfies Property \ref{property:SF2} as well. Thus $f$ is a spinal functor from $(S_{\downarrow f}, \Omega^S, \mathfrak{P}^S)$ to $(\mathcal{C}, \Omega, \mathfrak{P})$.

Finally note that, since $f$ is a spinal functor from $(S_{\downarrow f}, \Omega^S, \mathfrak{P}^S)$ to $(\mathcal{C}, \Omega, \mathfrak{P})$, it must be that, if there exists an $S$-functor $G$ over $(\mathcal{C}, \Omega, \mathfrak{P})$, then the composition $G \circ f$ is an $S$-functor over $(S_{\downarrow f}, \Omega^S, \mathfrak{P}^S)$. Thus $(S_{\downarrow f}, \Omega^S, \mathfrak{P}^S)$ is measurable whenever $(\mathcal{C}, \Omega, \mathfrak{P})$ is.
\end{proof}
\end{theorem}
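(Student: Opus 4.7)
The plan is to build $\Omega^S$ and $\mathfrak{P}^S$ directly from the two hypotheses on $f$, and then lean on the fact that hom-sets in $S_{\downarrow f}$ are \emph{defined} to coincide with the corresponding hom-sets in $\mathcal{C}$. First I would use hypothesis (1) to define $\Omega^S \colon \mathbb{N}_= \to S_{\downarrow f}$ by making an arbitrary choice of representative $\Omega^S_n \in f^{-1}(\Omega_n)$ for each $n$ (any two such choices are isomorphic in $S_{\downarrow f}$, as already observed just before the theorem, so the resulting structure is essentially unique). Then I would use hypothesis (2) to define $\mathfrak{P}^S(x,y) := Z_{x,y}$ on each span with apex $\Omega^S_n$, with structure maps inherited from the proxy-pushout cocone $\mathfrak{P}(x,y)$ in $\mathcal{C}$ (these live in the required hom-sets, since $\homset_{S_{\downarrow f}} = \homset_{\mathcal{C}}$ on images).

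Next I would verify the two spined-category axioms. Property \ref{property:SC1} is immediate: any $A \in S$ admits a morphism $A \to \Omega^S_n$ in $S_{\downarrow f}$ iff $f(A) \to \Omega_n$ exists in $\mathcal{C}$, which holds because $\mathcal{C}$ satisfies \ref{property:SC1}. For Property \ref{property:SC2}, I would take the given extended diagram in $S_{\downarrow f}$ and push it forward along $f$; by construction this becomes a diagram in $\mathcal{C}$ of exactly the form required by \ref{property:SC2} in $\mathcal{C}$, which produces a unique arrow $\mathfrak{P}(fh_1, fh_2) \to \mathfrak{P}(fj_1 h_1, fj_2 h_2)$. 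Since $f(\mathfrak{P}^S(-,-)) = \mathfrak{P}(f-, f-)$ by hypothesis (2), this arrow lies in $\homset_{\mathcal{C}}(f(\mathfrak{P}^S(h_1,h_2)), f(\mathfrak{P}^S(j_1 h_1, j_2 h_2)))$, which is by definition $\homset_{S_{\downarrow f}}(\mathfrak{P}^S(h_1,h_2), \mathfrak{P}^S(j_1 h_1, j_2 h_2))$. Uniqueness in $S_{\downarrow f}$ transfers from uniqueness in $\mathcal{C}$ for the same hom-set reason.

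Showing that $f$ is a spinal functor is then essentially a bookkeeping exercise: \ref{property:SF1} is automatic from the choice $\Omega^S_n \in f^{-1}(\Omega_n)$, and \ref{property:SF2} is exactly the content of hypothesis (2) together with the fact that the structure maps of $\mathfrak{P}^S$ were defined to be those of $\mathfrak{P}$. Finally, for measurability preservation, given any S-functor $G \colon \mathcal{C} \to \nat$, the composite $G \circ f \colon S_{\downarrow f} \to \nat$ is a composition of spinal functors and hence an S-functor over $S_{\downarrow f}$.

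The only step where I expect any subtlety is \ref{property:SC2}: one has to be careful that the arrow produced by pushing forward through $f$ and invoking \ref{property:SC2} in $\mathcal{C}$ is literally the arrow needed in $S_{\downarrow f}$, not merely one with the same image. This works cleanly here only because $\homset_{S_{\downarrow f}}(A,B)$ was defined as a copy of $\homset_{\mathcal{C}}(f(A), f(B))$, so there is nothing to check beyond matching source and target objects; in a setting where $f$ were allowed to be not faithful on hom-sets the argument would break. Everything else is formal consequence of the construction.
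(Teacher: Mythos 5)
Your proposal is correct and follows essentially the same route as the paper's proof: choose a representative of $f^{-1}(\Omega_n)$ for the spine, set $\mathfrak{P}^S(x,y) := Z_{x,y}$ with the structure maps of $\mathfrak{P}(x,y)$, and transfer \ref{property:SC1}, \ref{property:SC2}, the spinal-functor axioms, and measurability across the identification $\homset_{S_{\downarrow f}}(A,B) = \homset_{\mathcal{C}}(f(A),f(B))$, with measurability obtained by composing any S-functor on $\mathcal{C}$ with $f$. Your closing remark about why the \ref{property:SC2} transfer is legitimate matches the (implicit) reasoning in the paper.
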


Theorem \ref{thm:new_spined_categories_from_old} allows us to easily define new spined categories from ones we already know. For example, denoting, for every graph $G$, the set of all functions of the form $f: V(G) \to [|V(G)|]$ as $\ell(G)$, consider the set $\mathcal{L} := \{\ell(G): G \in \mathcal{G}\}$ of all vertex-labelings of all finite simple graphs. Let $Q: \mathcal{L} \to \GRPH_{mono}$ be the surjection
\[Q : \; \bigl( f: G \to [|V(G)|] \bigr ) \longmapsto G/_{f}\]
which takes every labeling $f: G \to [|V(G)|]$ in $\mathcal{L}$ to the quotient graph \[G/_f := (V(G)/_f, E(G)/_f \setminus \{xx: x \in V(G)\}).\] Since $\GRPH_{mono}$ is a measurable spined category, by Theorem \ref{thm:new_spined_categories_from_old}, we know that $\mathcal{L}_{\downarrow Q}$ is also a measurable spined category. In particular, the triangulation number $\Delta_{\ell}$ of $\mathcal{L}_{\downarrow Q}$ takes any object $f: G \to [|V(G)|]$ in $\mathcal{L}_{\downarrow Q}$ to the tree-width of $G/_f$. 

This construction might seem peculiar, since it maps labeling functions (as opposed to graphs themselves) to tree-widths of quotiented graphs. Thus we define the \emph{$\ell$-tree-width of any graph $G$}, denoted $\tw_\ell(G)$, as $\tw_\ell(G) = \min_{f \in \ell(G)} \Delta_{\ell}[f]$. This becomes trivialy if we allow all possible vertex-labelings. However, by imposing restrictions on the permissible labelings, we can obtain more meaningful width-measures on graphs. We briefly consider two examples to demonstrate this principle.

\begin{example}[Modular tree-width]\label{example:modular_tree_width}
Recall that a vertex-subset $X$ of a graph $G$ is a called a \emph{module} in $G$ if, for all vertices $z \in V(G) \setminus X$, either $z$ is adjacent to every vertex in $X$ or $N(z) \cap X = \emptyset$. We call a labeling function $f: V(G) \to [|V(G)|]$ \emph{modular} if, for all $i \in [|V(G)|]$, the preimage $f^{-1}(i)$ of $i$ is a module in $G$. Thus, denoting by $\mathcal{M}$ the set $\mathcal{M} := \bigl \{\{\lambda:  \lambda \text{ is modular labeling of } G\}: G \in \mathcal{G} \bigr \}$ of all modular labelings, we obtain, as we did above, a spined category $\mathcal{M}_{\downarrow Q}$, where $Q$ is the function taking each modular labeling to its corresponding modular quotient. 

Note that the triangualation number of $\mathcal{M}_{\downarrow Q}$ maps every modular labeling to the tree-width of the corresponding modular quotient. Thus we can define \emph{modular tree-width} which takes any graph $G$ to the minimum tree-width possible over the set of all modular quotients of $G$.
\end{example}

\begin{example}[Chromatic tree-width]\label{example:chromatic_tree_width}
Denote the set of all proper colorings as $\mathbf{col} := \bigl \{\{\lambda:  \lambda \text{ is proper coloring of } G\}: G \in \mathcal{G} \bigr \}$. Then, as we just did in Example \ref{example:modular_tree_width}, we can study the spined category $\mathbf{col}_{\downarrow Q}$ and its triangulation number. Proceeding as before, this immediately yields the notion of \emph{chromatic tree-width}. 
\end{example}

\section{Further Questions}\label{sec:discussion}
As we have seen, spined categories provide a convenient categorial settings for the study of tree-like decompositions. 

Proxy pushouts occupy a middle ground between the \textit{amalgamation property} familiar from model theory (see e.g. Brody's dissertation~\cite{Brody2009PhD} for a thorough graph-theoretic treatment) and the amount of exactness available in e.g.  \textit{adhesive categories}~\cite{LackAdhesive}. The latter do not allow us to define width measures \textit{functorially} since they would rule out $\nat$ as a codomain for our functors (in particular poset categories are not adhesive). In contrast, $\nat$ has proxy pushouts and is a spined category.

Among spined categories, the measurable ones come equipped with a distinguished S-functor, the triangulation functor of Definition~\ref{def:triangulation-functor}, which can be seen as a general counterpart to the graph-theoretic notion of tree-width, and which gives rise to an associated notion of completion/decomposition. Moreover, Theorem~\ref{thm:triang-functor-is-S-functor} shows that the only possible obstructions to measurability are the \textit{generic} ones: if there is no obstruction so strong that it precludes the existence of \textit{every} S-functor, there can be no further obstruction preventing the existence of the triangulation functor.

Since most settings have only one obvious choice of structure-preserving morphism (which fixes the pushout construction as well), functoriality leaves the choice of an appropriate spine as the only ``degree of freedom''\footnote{How can we tell that we chose a good spine? Measurability provides a natural criterion!}. This makes spined categories an interesting alternative to other techniques for defining graph width measures, such as \textit{layouts}\footnote{Sometimes referred to as `branch decompositions' of symmetric submodular functions.} (used for defining branch-width \cite{robertsonX}, rank-width \cite{oum2006approximating}, $\mathbb{F}_4$-width \cite{kante2011}, bi-cut-rank-width \cite{kante2011} and min-width \cite{vatshelle2012new}), which rely on less easily generalized, graph-theory-specific notions of \textit{connectivity}. Finding algebraic examples of spined categories and associated width measures remains a promising avenue for further work. In particular, as we move from combinatorial structures towards algebraic and order-theoretic ones, choosing a spine becomes an abundant source of technical questions. 

\paragraph{Question.} Consider the category $\mathbf{Poset}_{oe}$ which has finite posets as objects and order embeddings as morphisms, equipped with the usual pushout construction. Is there a sequence of objects $n \mapsto \Omega_n$ which makes $\mathbf{Poset}_{oe}$ into a measurable spined category? \linebreak

Another promising topic for future work (which we describe in what follows) would be to `allow more complex spines'. Currently the spine of any spined category $\mathcal{C}$ consists of an $\mathbb{N}$-indexed sequence of objects $\Omega_1, \Omega_2, \dots$ which satisfies the requirement that for each object $X$ of $\mathcal{C}$, there exists at least one natural $n$ such that the homset $\homset_{C}(X, \Omega_n)$ is non-empty. Such sequences exist in many interesting categories involving combinatorial objects; however, in these settings there might not be an optimal (not to mention unique) such choice. For example, in the category having graphs as objects and topological minors as arrows, should we choose $(K_n)_{n \in \mathbb{N}}$ to be the spine or should we choose the sequence $(\mathbb{K}_n)_{n \in \mathbb{N}}$ whose $n$-th element is the graph obtained by subdividing a each edge of an $n$-clique $n$-times? A similar example (already observed by Wollan~\cite[Observation 1]{WOLLAN201547}) can be made for graph immmersions: for every graph $G$, there exist naturals $n$ and $\ell$ such that $G$ has an immersion to a star on $n$-leaves with $\ell$ parallel edges joining the center to each leaf. %From an algorithmic perspective, it would be natural to expect these choices to matter. Indeed, form an intuitive perspective, the spine of a spined category acts as a yard stick with which we measure the complexity of objects and of decompositions thereof. 
These considerations motivate our desire for an even further generalization of spined categories to a similar construct where the spine need not be an $\mathbb{N}$-indexed sequence; indeed this is a promising direction for future work. 

\bibliography{biblio}

\begin{thebibliography}{10}

\bibitem{ADLER20072167}
I.~Adler, G.~Gottlob, and M.~Grohe.
\newblock Hypertree width and related hypergraph invariants.
\newblock {\em European Journal of Combinatorics}, 28(8):2167 -- 2181, 2007.

\bibitem{awodey2010category}
S.~Awodey.
\newblock {\em Category theory}.
\newblock Oxford university press, 2010.

\bibitem{Bertele1972NonserialProgramming}
U.~Bertel{\`{e}} and F.~Brioschi.
\newblock {\em Nonserial dynamic programming}.
\newblock Academic Press, Inc., 1972.

\bibitem{berwanger2012dag}
D.~Berwanger, A.~Dawar, P.~Hunter, S.~Kreutzer, and J.~Obdr{\v{z}}{\'a}lek.
\newblock The {DAG}-width of directed graphs.
\newblock {\em Journal of Combinatorial Theory, Series B}, 102(4):900--923,
  2012.

\bibitem{Brody2009PhD}
J.~Brody.
\newblock On the model theory of random graphs.
\newblock University of {M}aryland examined thesis, Ph. D., 2009.

\bibitem{bumpus2021generalizing}
B.~M. Bumpus.
\newblock {\em Generalizing graph decompositions}.
\newblock PhD thesis, University of Glasgow, 2021.

\bibitem{our-extended-abstract-ACT-2021}
B.~M. Bumpus and Z.~A. Kocsis.
\newblock Treewidth via spined categories (extended abstract), 2021.

\bibitem{CHEIN198135}
M.~Chein, M.~Habib, and M.~C. Maurer.
\newblock Partitive hypergraphs.
\newblock {\em Discrete Mathematics}, 37(1):35 -- 50, 1981.

\bibitem{COURCELLE199687}
B.~Courcelle.
\newblock The monadic second-order logic of graphs x: Linear orderings.
\newblock {\em Theoretical Computer Science}, 160(1):87 -- 143, 1996.

\bibitem{courcelle1993}
B.~Courcelle, J.~Engelfriet, and G.~Rozenberg.
\newblock Handle-rewriting hypergraph grammars.
\newblock {\em Journal of computer and system sciences}, 46(2):218--270, 1993.

\bibitem{cunningham_edmonds_1980}
W.~H. Cunningham and J.~E.
\newblock A combinatorial decomposition theory.
\newblock {\em Canadian Journal of Mathematics}, 32(3):734–765, 1980.

\bibitem{cygan2015parameterized}
M.~Cygan, F.~V. Fomin, {\L}.~Kowalik, D.~Lokshtanov, D.~Marx, M.~Pilipczuk,
  M.~Pilipczuk, and S.~Saurabh.
\newblock {\em Parameterized algorithms}.
\newblock Springer, 2015.

\bibitem{Diestel2010GraphTheory}
R.~Diestel.
\newblock {\em Graph theory}.
\newblock Springer, 2010.

\bibitem{flum2006parameterized}
J.~Flum and M.~Grohe.
\newblock Parameterized complexity theory. 2006.
\newblock {\em Texts Theoret. Comput. Sci. EATCS Ser}, 2006.

\bibitem{HABIB201041}
M.~Habib and C.~Paul.
\newblock A survey of the algorithmic aspects of modular decomposition.
\newblock {\em Computer Science Review}, 4(1):41 -- 59, 2010.

\bibitem{halin1976s}
R.~Halin.
\newblock S-functions for graphs.
\newblock {\em Journal of Geometry}, 8(1-2):171--186, 1976.

\bibitem{hunter2008digraph}
P.~Hunter and S.~Kreutzer.
\newblock Digraph measures: Kelly decompositions‚ games‚ and orderings.
\newblock {\em Theoretical Computer Science (TCS)}, 399, 2008.

\bibitem{oeisA051903}
O.~F. Inc.
\newblock The on-line encyclopedia of integer sequences.
\newblock \url{http://oeis.org/A051903}, 2020.
\newblock Accessed: 2020-11-08.

\bibitem{JOHNSON1985434}
D.~S. Johnson.
\newblock The np-completeness column: an ongoing guide.
\newblock {\em Journal of Algorithms}, 6(3):434 -- 451, 1985.

\bibitem{dtw}
T.~Johnson, N.~Robertson, P.~D. Seymour, and R.~Thomas.
\newblock Directed tree-width.
\newblock {\em Journal of combinatorial theory. Series B}, 82(1):138--154,
  2001.

\bibitem{kante2011}
M.~M. Kant{\'e} and M.~Rao.
\newblock $\mathbb{F}$-rank-width of (edge-colored) graphs.
\newblock In {\em International Conference on Algebraic Informatics}, pages
  158--173. Springer, 2011.

\bibitem{kreutzer2018}
S.~Kreutzer and O.-j. Kwon.
\newblock Digraphs of bounded width.
\newblock In {\em Classes of Directed Graphs}, pages 405--466. Springer, 2018.

\bibitem{LackAdhesive}
S.~Lack and P.~Sobocinski.
\newblock Adhesive categories.
\newblock In I.~Walukiewicz, editor, {\em Foundations of Software Science and
  Computation Structures}, pages 273--288, Berlin, Heidelberg, 2004. Springer
  Berlin Heidelberg.

\bibitem{leinster2013codensity}
T.~Leinster.
\newblock Codensity and the ultrafilter monad.
\newblock {\em Theory and Applications of Categories}, (28):332--370, 3 2013.

\bibitem{leinsterLebesgue}
T.~{Leinster}.
\newblock {The categorical origins of Lebesgue integration}.
\newblock {\em arXiv e-prints}, page arXiv:2011.00412, Oct. 2020.

\bibitem{oum2006approximating}
S.-i. Oum and P.~D. Seymour.
\newblock Approximating clique-width and branch-width.
\newblock {\em Journal of Combinatorial Theory, Series B}, 96(4):514--528,
  2006.

\bibitem{RobertsonXX}
N.~Robertson and P.~Seymour.
\newblock Graph minors. xx. wagner's conjecture.
\newblock {\em Journal of Combinatorial Theory, Series B}, 92(2):325 -- 357,
  2004.

\bibitem{RobertsonII}
N.~Robertson and P.~D. Seymour.
\newblock Graph minors. {II.} {A}lgorithmic aspects of tree-width.
\newblock {\em Journal of Algorithms}, 7(3):309--322, 9 1986.

\bibitem{robertsonX}
N.~Robertson and P.~D. Seymour.
\newblock Graph minors x. obstructions to tree-decomposition.
\newblock {\em Journal of Combinatorial Theory, Series B}, 52(2):153--190,
  1991.

\bibitem{safari2005d}
M.~A. Safari.
\newblock D-width: A more natural measure for directed tree width.
\newblock In {\em International Symposium on Mathematical Foundations of
  Computer Science}, pages 745--756. Springer, 2005.

\bibitem{vatshelle2012new}
M.~Vatshelle.
\newblock New width parameters of graphs.
\newblock 2012.

\bibitem{WOLLAN201547}
P.~Wollan.
\newblock The structure of graphs not admitting a fixed immersion.
\newblock {\em Journal of Combinatorial Theory, Series B}, 110:47--66, 2015.

\end{thebibliography}
\bibliographystyle{abbrv}

\end{document}